\theoremstyle{plain}
\newtheorem{theorem}{Theorem}
\newtheorem{lemma}[theorem]{Lemma}
\newtheorem{corollary}[theorem]{Corollary}
\theoremstyle{definition}
\newtheorem{example}[theorem]{Example}
\numberwithin{equation}{section}
\begin{document}

\title[Characterizations of democratic systems of translates]{Characterizations of democratic systems of translates on locally compact abelian groups}

\author{Vjekoslav Kova\v{c}}
\address{Vjekoslav Kova\v{c}, University of Zagreb, Faculty of Science, Department of Mathematics, Bijeni\v{c}ka cesta 30, 10000 Zagreb, Croatia.}
\email{vjekovac@math.hr}

\author{Hrvoje \v{S}iki\'{c}}
\address{Hrvoje \v{S}iki\'{c}, University of Zagreb, Faculty of Science, Department of Mathematics, Bijeni\v{c}ka cesta 30, 10000 Zagreb, Croatia.}
\email{hsikic@math.hr}

\thanks{Both authors are partially supported by the Croatian Science Foundation under the project 3526.}

\subjclass[2010]{
Primary 43A70; 
Secondary 42C40. 
}



\begin{abstract}
We present characterizations of democratic property for systems of translates on a general locally compact abelian group, along a lattice in that group. That way we generalize the results from \cite{HNSS13} on systems of integer translates. Furthermore, we investigate the possibilities of more operative characterizations for lattices with torsion group structure, mainly through examples and counterexamples.
\end{abstract}

\maketitle



\section{Introduction}

The study of greedy approximations in Banach spaces brought the democratic property to the attention of
mathematical community. For early results on the greedy algorithm consult the work of V.\,N.\,Temlyakov \cite{Tem1} and \cite{Tem2}.
We would like to emphasize the theorem of S.\,V.\,Konyagin and V.\,N.\,Temlyakov (see \cite{KT99}), which states that a basis in a
Banach space is greedy if and only if it is unconditional and democratic. Recall that conditional bases are often very
difficult to construct and, therefore, the democratic property provides an interesting framework to analyze large
classes of conditional bases. Furthermore, as already outlined in an article by P.\,Wojtaszczyk \cite{Woj}, one can
extend the notion of the democratic property to more general systems than bases. In particular, it is natural to
analyze the democratic property within the realm of various reproducing function systems, like wavelets, Gabor systems, etc.
Within such systems most often we have a core space, which is generated by integer translations of a single
function. This was a point of view taken in \cite{HNSS13}; this paper serves as the main motivating point for
our paper. Let us be more precise here.

A family $\mathcal{F}$ of nonzero vectors in a Banach space $(X,\|\cdot\|)$ is called \emph{democratic} if there exists a
constant $D\in\langle 0,+\infty\rangle$ such that for any two finite subsets $\Gamma_1$ and $\Gamma_2$ of $\mathcal{F}$ with
the same cardinalities one has
\begin{equation}\label{eq:democraticdef}
\Big\|\sum_{f\in\Gamma_1}\frac{f}{\|f\|}\Big\| \leq D \,\Big\|\sum_{f\in\Gamma_2}\frac{f}{\|f\|}\Big\|.
\end{equation}
If all vectors in $\mathcal{F}$ have the same norm, then the above inequality becomes
\[ \Big\|\sum_{f\in\Gamma_1}f\Big\| \leq D \,\Big\|\sum_{f\in\Gamma_2}f\Big\| \]
and it can be rephrased by saying that the norms of the sums $\sum_{f\in\Gamma}f$ are mutually comparable
(up to an absolute constant) for all finite sets $\Gamma\subseteq\mathcal{F}$ of the same size.

Many of the common systems for decomposition and reconstruction of functions begin by considering a closed subspace of $\textup{L}^2(\mathbb{R})$
generated by integer translates of a single square-integrable function. More precisely, one can take $\psi\in\textup{L}^2(\mathbb{R})$, denote
\[ \mathcal{F}_\psi := \big(\psi(\cdot-k)\big)_{k\in\mathbb{Z}},\quad \langle\psi\rangle := \overline{\mathop{\textup{span}}\mathcal{F}_\psi}, \]
and ask to characterize various basis-like properties of the system $\mathcal{F}_\psi$ for the Hilbert space $\langle\psi\rangle$. It turns out that
all such properties can be characterized in terms of the periodization of $|\widehat{\psi}|^2$, defined by
\[ p_\psi(\xi) := \sum_{k\in\mathbb{Z}} |\widehat{\psi}(\xi+k)|^2; \quad \xi\in\mathbb{T}, \]
where $\mathbb{T}=\mathbb{R}/\mathbb{Z}$ is a one-dimensional torus, while the Fourier transform of $f\in\textup{L}^1(\mathbb{R})\cap\textup{L}^2(\mathbb{R})$ is normalized as
\[ \widehat{f}(\xi) := \int_{\mathbb{R}} f(x) e^{-2\pi i x \xi} dx; \quad \xi\in\mathbb{R} \]
and then extended by continuity to the whole $\textup{L}^2(\mathbb{R})$. These characterizations are consequences of the isometric isomorphism between the
shift-invariant space $\langle\psi\rangle$ and the weighted Lebesgue space $\textup{L}^2(\mathbb{T},p_\psi)$, which maps the translates of $\psi$ to the exponentials,
and in turn enables the usage of many classical results from the Fourier analysis. An interested reader can find more details and
numerous references in \cite{HSWW10a}; consult also \cite{CP10}, \cite{HP06}, \cite{HSWW10}, \cite[Sec.~2.1]{HW96},  \cite{HMW73}, \cite{NS07}, \cite{NS14}, \cite{Pa10}, \cite{S13}, \cite{SSl12}, \cite{SSW08}, \cite{Sl16}, and \cite{Sl14}.

Consider the system of translates $\mathcal{F}_\psi$ generated by some $\psi\in\textup{L}^2(\mathbb{R})$. It is natural to try to characterize all $\psi$ for which this system is democratic.
This task was initiated in \cite{HNSS13}. Some characterizing theorems are provided there, as well as numerous necessary or sufficient conditions in various special situations.
However, as stated by the authors in \cite{HNSS13}, the characterizing condition which is at the same time simple and operative remains unknown. It is perhaps somewhat intriguing
why this problem is so difficult, especially in the light of various other basis-like properties that have been successfully treated even on the higher level of generality.
We believe that one aspect of the difficulty of this problem is hidden in the fact that the additive group of integers has an ``unsuitable'' subgroup structure, that is
with respect to this characterization problem. It does not contain any non-trivial elements of the finite order. Why is this important? We propose to elevate the problem
to the higher level of generality and thus reveal the elements of its complexity more clearly. The problem can be formulated for functions on locally compact abelian groups $G$ translated by elements
of a lattice $\mathcal{L}\subseteq G$. As we will see, the democratic property leads to conditions that need to be checked for a very large class of subsets and this problem
is quite demanding from the combinatorics point of view. The problem can be radically simplified, but this discussion leads us to several issues related to seemingly unrelated disciplines,
like the subgroup structure of the group $G$, some ergodic properties and some geometric properties (convexity in particular). In short, we prove that the democratic
property is a condition that is characterized via an operative integrability condition that needs to be tested on a family of extreme points of a certain convex set. This
family is essentially always boxed in between the class of all finite subsets of $G$ (which is usually much larger than the testing family) and the class of all finite subgroups
of $G$ (which, unfortunately, is smaller than the testing family).

Let us give a few words about the organization of the present paper. In Section~\ref{sec:generalconditions} we generalize the results from \cite{HNSS13} at the level of arbitrary locally compact abelian groups. Theorem~\ref{thm:abstractchargamma} characterizes democratic systems of translates by testing sizes of the finite sums \hbox{$\sum_{k\in\Gamma}\psi(\cdot-k)$} for all nonempty finite sets $\Gamma\subseteq\mathcal{L}$. Theorem~\ref{thm:abstractcharppsi} characterizes democratic systems of translates in terms of boundedness and certain density properties of the generalization of the periodization function $p_\psi$. It is suspected that there is a lot of redundancy in verifying those conditions. However, no ``operative'' equivalent conditions are known, even in the case of integer translates, and only a conjecture was stated in the prequel to this paper \cite{HNSS13}. Consequently, Section~\ref{sec:operativeconditions} discusses the possible operative conditions for torsion lattices, but mostly through a series of remarks, examples, counterexamples, and numerical data. We also use the opportunity to formulate a couple of open questions.


\section{Translates along general lattices}
\label{sec:generalconditions}

In this section we first describe the setting in which we formulate the questions mentioned in the introduction. Then we raise the main results from \cite{HNSS13} to the level of abstract abelian groups.


\subsection{Locally compact abelian setting}

Most of the following material is taken from standard books on harmonic analysis on locally compact groups, such as \cite{F95}, \cite{HR79,HR70}, or \cite{R62}, and it has already been adapted to a similar context, for instance in the papers \cite{BR15}, \cite{CP10}, \cite{HSWW10}, \cite{N15}, \cite{SW11}, and \cite{Sl16}.

Whenever we have two subsets $A,B$ of an additively written abelian group $(G,+)$, let us use the notation $A+B$ for the so-called \emph{sumset}, defined as
\[ A+B := \{x+y \,:\, x\in A,\, y\in B\}. \]
If one of the subsets is just a singleton, for instance $A=\{x\}$, then we simply write $x+B$ instead of $\{x\}+B$, and in this case the sumset is just a translate of $B$ by $x$. Analogously we define the \emph{difference set} $A-B$ and the multiple sumset $A_1+A_2+\cdots+A_n$.
The subgroup generated by a set $A\subseteq G$ will be denoted by $\langle A\rangle$; it is the smallest subgroup of $G$ that contains $A$.

Let $(G,+)$ be an abelian topological group, which means that the underlying topology makes both the addition $G\times G\to G$, $(x,y)\mapsto x+y$ and the inversion $G\to G$, $x\mapsto -x$ continuous. Assume that the topology of $G$ is Hausdorff and locally compact, i.e., each point of $G$ has a compact neighborhood. The smallest $\sigma$-algebra on $G$ containing all open subsets is called the \emph{Borel $\sigma$-algebra} and denoted $\mathcal{B}(G)$. There exist a nontrivial Radon measure $\lambda_G$ on $(G,\mathcal{B}(G))$ that is invariant under the translations on $G$, i.e.,
\[ \lambda_G(x+A) = \lambda_G(A) \]
for all $A\in\mathcal{B}(G)$ and $x\in G$. Such measure $\lambda_G$ is unique up to a constant multiple and it is called the \emph{Haar measure} of $G$. It will always be understood whenever we suppress it notationally from integrals or function spaces, i.e., $dx$ will have to be interpreted as $d\lambda_G(x)$ for an appropriate group $G$ and $\textup{L}^p(G)$ will be an abbreviation for $\textup{L}^p(G,\mathcal{B}(G),\lambda_G)$. The measure $\lambda_G$ is finite if and only if $G$ is compact, while in the case of a discrete group $G$ the Haar measure is simply a constant multiple of the counting measure.

If $(\textup{S}^1,\cdot)$ is the group of unimodular complex numbers, then any continuous homomorphism $\xi\colon (G,+)\to (\textup{S}^1,\cdot)$ is called a \emph{(unitary) character} of $G$ and the collection of all characters is denoted $\widehat{G}$. If we endow $\widehat{G}$ with the most obvious pointwise binary operation,
\[ (\xi + \zeta)(x) := \xi(x) \zeta(x); \quad x\in G, \]
and with the topology of uniform convergence on compact subsets of $G$, it also becomes a locally compact Hausdorff topological group. It is called the \emph{dual group} of $G$ and it possesses its own Haar measure $\lambda_{\widehat{G}}$. The group $\widehat{G}$ is compact if and only if $G$ is discrete and vice versa. Let $E_G\colon G\times\widehat{G}\to\textup{S}^1$ be the \emph{bi-character function}, i.e.,
\begin{itemize}
\item $E_G(x,\xi) = \xi(x)$ for $x\in G$ and $\xi\in\widehat{G}$,
\item $E_G(\cdot,\xi)$; $\xi\in\widehat{G}$ are all characters of $G$,
\item $E_G(x,\cdot)$; $x\in G$ are all characters of $\widehat{G}$.
\end{itemize}
Here we implicitly use the Pontrjagin duality theorem, i.e., that the dual group of $\widehat{G}$ is canonically isomorphic to $G$ itself. The fundamental algebraic properties of $E_G$ are
\begin{equation}\label{eq:bicharhom1}
E_G(x+y,\xi) = E_G(x,\xi)E_G(y,\xi),\quad E_G(x,\xi+\zeta) = E_G(x,\xi)E_G(x,\zeta),
\end{equation}
and
\begin{equation}\label{eq:bicharhom2}
E_G(-x,\xi) = \overline{E_G(x,\xi)} = E_G(x,\xi)^{-1} = E_G(x,-\xi)
\end{equation}
for any $x,y\in G$ and any $\xi,\zeta\in\widehat{G}$. The function $E_G$ serves as an analogue of the pure exponentials, since in the classical case $G=\mathbb{R}^d$, $\widehat{G}\cong\mathbb{R}^d$ one can take
\begin{equation}\label{eq:bicharreals}
E_{\mathbb{R}^d}\colon\mathbb{R}^d\times\mathbb{R}^d\to\textup{S}^1,\quad E_{\mathbb{R}^d}(x,\xi) = e^{2\pi i x\cdot\xi},
\end{equation}
where $x\cdot \xi$ stands for the standard inner product on $\mathbb{R}^d$. The \emph{Fourier transform} is initially defined for $f\in\textup{L}^1(G)\cap\textup{L}^2(G)$ by the formula
\begin{equation}\label{eq:fourierdef}
\widehat{f}\colon\widehat{G}\to\mathbb{C},\quad \widehat{f}(\xi) := \int_G f(x) \overline{E_G(x,\xi)} dx;\quad \xi\in\widehat{G}
\end{equation}
and then the isometry $f\mapsto\widehat{f}$ extends to a unitary operator from $\textup{L}^2(G)$ onto $\textup{L}^2(\widehat{G})$ if we choose the appropriate normalization of $\lambda_{\widehat{G}}$ depending on the normalization of $\lambda_G$.

If $H$ is any closed subgroup of $G$, then its \emph{orthogonal complement} is defined as
\[ H^\perp := \{\xi\in\widehat{G} \,:\, E_G(x,\xi)=1 \text{ for each } x\in H\} \]
and it is actually a closed subgroup of $\widehat{G}$. Indeed, $H\mapsto H^\perp$ is a bijective correspondence between closed subgroups of $G$ and closed subgroups of $\widehat{G}$. If $q\colon G\to G/H$ denotes the canonical epimorphism onto the quotient of $G$ by some closed subgroup $H$, then the maps
\begin{equation}\label{eq:groupiso1}
\widehat{G/H} \to H^\perp,\quad \eta\mapsto \eta\circ q
\end{equation}
and
\begin{equation}\label{eq:groupiso2}
\widehat{G}/H^\perp \to \widehat{H},\quad \xi+H^\perp \mapsto \xi|_H
\end{equation}
constitute isomorphisms of topological groups; see \cite[Thm.~4.39]{F95}.

Let us explain how one can construct the bi-character function $E_H$ of $H$ from the bi-character function $E_G$ of $G$. Observe that for any $x\in H$ and any $\xi_1,\xi_2\in\widehat{G}$ such that $\xi_1-\xi_2\in H^\perp$ we have $E_G(x,\xi_1-\xi_2)=1$, so by \eqref{eq:bicharhom1} and \eqref{eq:bicharhom2} we get $E_G(x,\xi_1) = E_G(x,\xi_2)$.
Thus, $E_G\colon G\times\widehat{G}\to\textup{S}^1$ restricted to $H\times\widehat{G}$ factors via a bi-homomorphism $H\times(\widehat{G}/H^\perp)\to\textup{S}^1$, which becomes precisely the bi-character of $H$ after the identification coming from \eqref{eq:groupiso2}.
In other words,
\begin{equation}\label{eq:subgroupbichar}
E_H(x,\zeta) = E_G(x,\xi)
\end{equation}
whenever $x\in H$, $\zeta\in\widehat{H}$, and $\xi\in\widehat{G}$ is any character such that $\xi|_H=\zeta$.

Another structural ingredient we need is a \emph{lattice} $\mathcal{L}$ in $G$, or more precisely, a closed subgroup $\mathcal{L}$ of $G$ such that the relative topology on $\mathcal{L}$ inherited from $G$ becomes discrete and such that the quotient space $G/\mathcal{L}$ is compact (i.e., it is a compact Hausdorff topological group). In all that follows we assume that $\mathcal{L}$ is countably infinite, so that it serves as a reasonable generalization of the integer lattice $\mathbb{Z}^d$. Note that in particular $\mathcal{L}$ cannot be compact, which also forces the whole group $G$ to be noncompact.
Since $G/\mathcal{L}$ is compact, its dual group is discrete, so from \eqref{eq:groupiso1} applied with $H=\mathcal{L}$ we get
\begin{equation}\label{eq:groupiso1l}
\widehat{G/\mathcal{L}}\cong \mathcal{L}^\perp
\end{equation}
and thus we know that $\mathcal{L}^\perp$ is discrete too. On the other hand, since $\mathcal{L}$ is discrete, its dual must be compact, so by applying \eqref{eq:groupiso2} with $H=\mathcal{L}$ we obtain
\begin{equation}\label{eq:groupiso2l}
\widehat{G}/\mathcal{L}^\perp\cong\widehat{\mathcal{L}}
\end{equation}
and we conclude that $\mathcal{L}^\perp$ is cocompact in $\widehat{G}$. From these observations we find that $\mathcal{L}^\perp$ is a lattice in $\widehat{G}$.


\subsection{Testing sizes of finite sums}

The \emph{translate} of $f\in\textup{L}^2(G)$ by $y\in G$ is the function $T_y f\in\textup{L}^2(G)$ defined by
\[ (T_y f)(x) := f(x-y); \quad x\in G. \]
Now we take a function $\psi\in\textup{L}^2(G)$ such that $\|\psi\|_{\textup{L}^2(G)}\neq 0$ and consider its translates by the elements of $\mathcal{L}$,
\[ \mathcal{F}_\psi := \big(T_k \psi\big)_{k\in\mathcal{L}}. \]
The basic question we want to answer is:
\begin{itemize}
\item[] \emph{When does $\mathcal{F}_\psi$ constitute a democratic system in $\textup{L}^2(G)$ in the sense of the general definition \eqref{eq:democraticdef}?}
\end{itemize}
The first step towards the resolution is the following straightforward generalization of \cite[Thm.~4.3]{HNSS13}. Recall that we have assumed $\mathcal{L}$ to be infinite and countable. The characterization below fails when $\mathcal{L}$ is finite, but finite democratic families are not particularly interesting and they have already been discussed in \cite{HNSS13}.

\begin{theorem}\label{thm:abstractchargamma}
The system $\mathcal{F}_\psi$ is democratic if and only if there exist constants $c_\psi,C_\psi\in\langle 0,+\infty\rangle$ such that for every nonempty finite set $\Gamma\subset\mathcal{L}$ we have
\begin{equation}\label{eq:basicchargamma}
c_\psi\mathop{\textup{card}}\Gamma \leq \Big\|\sum_{k\in\Gamma} T_k\psi\Big\|_{\textup{L}^2(G)}^2 \leq C_\psi\mathop{\textup{card}}\Gamma.
\end{equation}
\end{theorem}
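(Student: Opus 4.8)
The starting point is the observation that, since $\lambda_G$ is translation-invariant, every translate has the same norm, $\|T_k\psi\|_{\textup{L}^2(G)}=\|\psi\|_{\textup{L}^2(G)}$ for all $k\in\mathcal{L}$. Consequently the normalizing denominators in \eqref{eq:democraticdef} all coincide and cancel, so that, exactly as remarked after \eqref{eq:democraticdef}, democracy of $\mathcal{F}_\psi$ is equivalent to the existence of a constant $D\in\langle 0,+\infty\rangle$ with $\|\sum_{k\in\Gamma_1}T_k\psi\|_{\textup{L}^2(G)}\le D\,\|\sum_{k\in\Gamma_2}T_k\psi\|_{\textup{L}^2(G)}$ whenever $\mathop{\textup{card}}\Gamma_1=\mathop{\textup{card}}\Gamma_2$. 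The implication \eqref{eq:basicchargamma}$\Rightarrow$democracy is then immediate: if $\mathop{\textup{card}}\Gamma_1=\mathop{\textup{card}}\Gamma_2=n$, the two bounds give $\|\sum_{k\in\Gamma_1}T_k\psi\|_{\textup{L}^2(G)}^2\le C_\psi n\le (C_\psi/c_\psi)\,\|\sum_{k\in\Gamma_2}T_k\psi\|_{\textup{L}^2(G)}^2$, so $D=\sqrt{C_\psi/c_\psi}$ works.

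For the converse the plan is to produce, for each cardinality, one explicit \emph{reference} family of translates whose sum I can size precisely, and then let the democratic inequality propagate that estimate to all families of the same cardinality. Everything hinges on the decay of autocorrelations: for fixed $\psi$ the inner products $\langle T_m\psi,\psi\rangle_{\textup{L}^2(G)}=\int_G\psi(x-m)\overline{\psi(x)}\,dx$ tend to $0$ as $m$ leaves every compact subset of $G$ while ranging over $\mathcal{L}$. I would prove this by approximating $\psi$ in $\textup{L}^2(G)$ by a compactly supported $\phi$; since $\mathcal{L}$ is discrete and closed, it meets the compact difference set $\mathop{\textup{supp}}\phi-\mathop{\textup{supp}}\phi$ in only finitely many points, so $T_m\phi$ and $\phi$ have disjoint supports for all but finitely many $m\in\mathcal{L}$, whence $\langle T_m\phi,\phi\rangle_{\textup{L}^2(G)}=0$ there, and a routine triangle-inequality estimate controls the error coming from $\psi-\phi$.

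Using this, I would build the reference family greedily. Fix $n$ and choose $k_1,\dots,k_n\in\mathcal{L}$ inductively so that at each step $\sum_{i<j}|\langle T_{k_i}\psi,T_{k_j}\psi\rangle_{\textup{L}^2(G)}|<\tfrac14\|\psi\|_{\textup{L}^2(G)}^2$; this is possible because $\langle T_{k_i}\psi,T_{k_j}\psi\rangle_{\textup{L}^2(G)}=\langle T_{k_i-k_j}\psi,\psi\rangle_{\textup{L}^2(G)}$ excludes, by the decay just established, only finitely many candidates $k_j$, while $\mathcal{L}$ is infinite. Expanding $\|\sum_{j=1}^n T_{k_j}\psi\|_{\textup{L}^2(G)}^2=n\|\psi\|_{\textup{L}^2(G)}^2+\sum_{i\neq j}\langle T_{k_i}\psi,T_{k_j}\psi\rangle_{\textup{L}^2(G)}$ and bounding the cross terms by $\tfrac12 n\|\psi\|_{\textup{L}^2(G)}^2$ yields $\tfrac12 n\|\psi\|_{\textup{L}^2(G)}^2\le\|\sum_{j=1}^n T_{k_j}\psi\|_{\textup{L}^2(G)}^2\le\tfrac32 n\|\psi\|_{\textup{L}^2(G)}^2$. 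Comparing an arbitrary $\Gamma$ with $\mathop{\textup{card}}\Gamma=n$ to this reference family through the democratic constant $D$ and squaring then gives \eqref{eq:basicchargamma} with $c_\psi=\|\psi\|_{\textup{L}^2(G)}^2/(2D^2)$ and $C_\psi=\tfrac32 D^2\|\psi\|_{\textup{L}^2(G)}^2$.

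I expect the main obstacle to be the autocorrelation decay together with the greedy selection, that is, the honest use of the hypotheses that $\mathcal{L}$ is \emph{infinite}, discrete, and cocompact. This is precisely the ingredient that breaks down for finite $\mathcal{L}$, and thereby explains the exclusion of the finite case noted just before the statement; the remainder is only bookkeeping with the cross terms and the constant $D$.
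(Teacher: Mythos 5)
Your proposal is correct and shares the paper's overall strategy: sufficiency is dispatched with $D=\sqrt{C_\psi/c_\psi}$, and for necessity both you and the paper reduce to constructing, for each $n$, one reference set $\Gamma_n\subset\mathcal{L}$ of cardinality $n$ satisfying $\tfrac12 n\|\psi\|_{\textup{L}^2(G)}^2\le\big\|\sum_{k\in\Gamma_n}T_k\psi\big\|_{\textup{L}^2(G)}^2\le\tfrac32 n\|\psi\|_{\textup{L}^2(G)}^2$, after which the democratic inequality (legitimately simplified, since all translates have equal norm) yields \eqref{eq:basicchargamma} with exactly your constants $c_\psi=\|\psi\|_{\textup{L}^2(G)}^2/2D^2$ and $C_\psi=\tfrac32 D^2\|\psi\|_{\textup{L}^2(G)}^2$. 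The implementations of the reference sets differ, however. The paper argues by induction on $n$: at step $n$ it takes a fresh compactly supported continuous approximant $\varphi$ with $\|\varphi-\psi\|_{\textup{L}^2(G)}<\|\psi\|_{\textup{L}^2(G)}/10n^2$, uses non-compactness of $\mathcal{L}$ to find $m\in\mathcal{L}$ for which $\mathop{\textup{supp}}T_m\varphi$ is disjoint from the supports of $T_k\varphi$, $k\in\Gamma_{n-1}$, obtains exact additivity of squared norms for $\varphi$, and transfers the estimate back to $\psi$ with a per-step error below $\tfrac12\|\psi\|_{\textup{L}^2(G)}^2$; inner products of translates of $\psi$ are never considered explicitly. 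You instead isolate a reusable lemma --- for every $\delta>0$ only finitely many $m\in\mathcal{L}$ satisfy $|\langle T_m\psi,\psi\rangle|\ge\delta$, proved by the same compact-support approximation combined with the fact that the closed discrete set $\mathcal{L}$ meets the compact set $\mathop{\textup{supp}}\phi-\mathop{\textup{supp}}\phi$ in finitely many points --- and then perform a one-shot greedy selection keeping the total pairwise correlation below $\tfrac14\|\psi\|_{\textup{L}^2(G)}^2$, after which a single expansion of the square finishes the job. Both arguments consume the same hypotheses (density of compactly supported functions in $\textup{L}^2(G)$; $\mathcal{L}$ infinite, discrete, closed), so neither is more general; what yours buys is modularity and the avoidance of the induction with its per-$n$ re-approximation and error propagation, at the minor bookkeeping cost of apportioning the correlation budget over the $n$ selection steps (harmless, since at each step only finitely many candidates are excluded while $\mathcal{L}$ is infinite). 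Your closing remark about where infinitude of $\mathcal{L}$ enters, and why the finite case is excluded, matches the paper's.
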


\begin{proof}[Proof of Theorem~\ref{thm:abstractchargamma}]
The sufficiency is trivial, as one can simply take $D=\sqrt{C_\psi/c_\psi}$.
For the necessity it is enough to show that for each positive integer $n$ there exists a subset $\Gamma_n\subset\mathcal{L}$ of cardinality $n$ such that
\begin{equation}\label{eq:proofinductest}
\frac{1}{2}n\|\psi\|_{\textup{L}^2(G)}^2 \leq \Big\|\sum_{k\in\Gamma_n} T_k\psi\Big\|_{\textup{L}^2(G)}^2 \leq \frac{3}{2}n\|\psi\|_{\textup{L}^2(G)}^2.
\end{equation}
This statement is easily established by the induction on $n$. For the induction basis $n=1$ we can simply take $\Gamma_1$ to be a singleton containing an arbitrary element of $\mathcal{L}$ and use the translation invariance of the $\textup{L}^2$ norm. In the induction step we take $n\geq 2$ and assume that we have already constructed the set $\Gamma_{n-1}$. By a density argument one can first find a compactly supported continuous function $\varphi$ on $G$ such that $\|\varphi-\psi\|_{\textup{L}^2(G)} < (1/10n^2)\|\psi\|_{\textup{L}^2(G)}$. Denote $K := \bigcup_{k\in\Gamma_{n-1}} (k+\mathop{\textup{supp}}\varphi)$. Since the set $K-\mathop{\textup{supp}}\varphi$ is compact, while $\mathcal{L}$ is not, there exists at least one element $m\in\mathcal{L}\setminus(K-\mathop{\textup{supp}}\varphi)$. Thus $m+\mathop{\textup{supp}}\varphi$ is disjoint from $K$, i.e., $\mathop{\textup{supp}}T_m\varphi$ and $\bigcup_{k\in\Gamma_{n-1}}\mathop{\textup{supp}}T_k\varphi$ are mutually disjoint sets. If we define $\Gamma_n := \Gamma_{n-1}\cup\{m\}$, we will get a subset of cardinality $n$ and the previously mentioned disjointness of supports will yield
\begin{align*}
& \Big\|\sum_{k\in\Gamma_n} T_k\varphi \Big\|_{\textup{L}^2(G)}^2
= \Big\|\sum_{k\in\Gamma_{n-1}} T_k\varphi + T_m\varphi \Big\|_{\textup{L}^2(G)}^2 \\
& = \Big\|\sum_{k\in\Gamma_{n-1}} T_k\varphi\Big\|_{\textup{L}^2(G)}^2 + \|T_m\varphi\|_{\textup{L}^2(G)}^2
= \Big\|\sum_{k\in\Gamma_{n-1}} T_k\varphi\Big\|_{\textup{L}^2(G)}^2 + \|\varphi\|_{\textup{L}^2(G)}^2.
\end{align*}
Using this equality and estimating
\[ \big| \|f\|_{\textup{L}^2(G)}^2 - \|g\|_{\textup{L}^2(G)}^2 \big| \leq \|f-g\|_{\textup{L}^2(G)} \big(\|f\|_{\textup{L}^2(G)}+\|g\|_{\textup{L}^2(G)}\big) \]
for any $f,g\in\textup{L}^2(G)$, we see that
\[ \Big\|\sum_{k\in\Gamma_n} T_k\psi \Big\|_{\textup{L}^2(G)}^2 \quad\text{and}\quad \Big\|\sum_{k\in\Gamma_{n-1}} T_k\psi\Big\|_{\textup{L}^2(G)}^2 + \|\psi\|_{\textup{L}^2(G)}^2 \]
differ by at most
\[ \big(n^2+(n-1)^2+1\big) \|\varphi-\psi\|_{\textup{L}^2(G)} \big(\|\varphi\|_{\textup{L}^2(G)}+\|\psi\|_{\textup{L}^2(G)}\big), \]
which is, by the choice of $\varphi$, less than $\|\psi\|_{\textup{L}^2(G)}^2$ times
\[ \big(n^2+(n-1)^2+1\big) \frac{1}{10n^2} \Big(2+\frac{1}{10n^2}\Big) < \frac{1}{2}. \]
Combining this with the induction hypothesis
\[ \frac{1}{2}(n-1)\|\psi\|_{\textup{L}^2(G)}^2 \leq \Big\|\sum_{k\in\Gamma_{n-1}} T_k\psi\Big\|_{\textup{L}^2(G)}^2 \leq \frac{3}{2}(n-1)\|\psi\|_{\textup{L}^2(G)}^2 \]
gives \eqref{eq:proofinductest} and completes the inductive proof.
\end{proof}

Characterization from Theorem~\ref{thm:abstractchargamma} is still not especially operative, since it might be redundant to verify condition \eqref{eq:basicchargamma} for all finite subsets $\Gamma$.


\subsection{Testing the periodization function}

One can also hope to phrase the answer to the main question in terms of the \emph{periodization function}, which is now defined as
\[ p_\psi(\xi) := \sum_{m\in\mathcal{L}^\perp} |\widehat{\psi}(\xi+m)|^2; \quad \xi\in\widehat{G}. \]
Since $p_\psi(\xi)$ only depends on the coset of $\widehat{G}/\mathcal{L}^\perp$ to which $\xi$ belongs, by \eqref{eq:groupiso2l} we can view $p_\psi$ as a function $\widehat{\mathcal{L}}\to[0,+\infty]$, $\tau\mapsto p_\psi(\tau):=p_\psi(\xi)$. Here one has to extend the character $\tau$ of $\mathcal{L}$ arbitrarily to a character $\xi$ of $G$, which is always possible via the isomorphism \eqref{eq:groupiso2l}. Moreover, the formula in \cite[Thm.~2.49]{F95} tells us how to integrate over the quotient group, so we have
\begin{align*}
& \int_{\widehat{G}/\mathcal{L}^\perp} p_\psi(\xi) d\xi = \int_{\widehat{G}/\mathcal{L}^\perp} \!\Big(\sum_{m\in\mathcal{L}^\perp} |\widehat{\psi}(\xi+m)|^2\Big) d\xi \\
& = \int_{\widehat{G}} |\widehat{\psi}(\zeta)|^2 d\zeta = \|\widehat{\psi}\|_{\textup{L}^2(\widehat{G})}^2 = \|\psi\|_{\textup{L}^2(G)}^2 < +\infty.
\end{align*}
Hence, in fact $p_\psi\in\textup{L}^{1}(\widehat{G}/\mathcal{L}^\perp)\cong\textup{L}^{1}(\widehat{\mathcal{L}})$ and in particular $p_\psi$ is finite a.e.

The bi-character of $\mathcal{L}$ will be denoted $e\colon\mathcal{L}\times\widehat{\mathcal{L}}\to\textup{S}^1$ from now on. The one thing we need to remember from the construction leading to \eqref{eq:subgroupbichar} is that
\begin{equation}\label{eq:smallandbige}
e(k,\tau) = E_G(k,\xi)
\end{equation}
whenever $k\in\mathcal{L}$, $\xi\in\widehat{G}$, and $\tau=\xi|_\mathcal{L}\in\widehat{\mathcal{L}}$. For example, when $G=\mathbb{R}^d$, $\mathcal{L}=\mathbb{Z}^d$, then $\widehat{\mathcal{L}}\cong\mathbb{T}^d$ and in addition to \eqref{eq:bicharreals} we have
\[ e\colon\mathbb{Z}^d\times\mathbb{T}^d\to\textup{S}^1,\quad e(k,\tau) = e^{2\pi i k\cdot\tau}. \]
The Haar measure on $\widehat{\mathcal{L}}$ is finite because this group is compact. We find convenient to normalize it so that $\lambda_{\widehat{\mathcal{L}}}(\widehat{\mathcal{L}})=1$.

Let us agree to denote
\begin{equation}\label{eq:gfunctdef}
g_\Gamma\colon\widehat{\mathcal{L}}\to[0,+\infty\rangle,\quad g_\Gamma(\tau) := \frac{1}{\mathop{\textup{card}}\Gamma}\Big|\sum_{k\in\Gamma}e(k,\tau)\Big|^2; \quad \tau\in\widehat{\mathcal{L}}
\end{equation}
for every nonempty finite set $\Gamma\subset\mathcal{L}$. By \eqref{eq:groupiso2l} and \eqref{eq:smallandbige} we can also interpret $g_\Gamma$ as a function on $\widehat{G}/\mathcal{L}^\perp$. From \eqref{eq:bicharhom1} we immediately see that translating $\Gamma$ does not affect $g_\Gamma$. Basic properties of the functions $g_\Gamma$ are given in the following lemma and its part (b) also motivates their definition.


\begin{lemma}\label{lemma:gdefprop}
Let $\Gamma\subset\mathcal{L}$ be an arbitrary nonempty finite set.
\begin{itemize}

\item[(a)]
The function $g_\Gamma$ defined by \eqref{eq:gfunctdef} is nonnegative, even, continuous, and it integrates to $1$.
Moreover, $g_\Gamma$ is a generalized trigonometric polynomial (i.e., a finite linear combination of characters $e(k,\cdot)$; $k\in\mathcal{L}$) with all coefficients in $[0,1]$, and its expansion is given by
\[ g_\Gamma(\tau) = \sum_{k\in\mathcal{L}} \frac{\mathop{\textup{card}}(\Gamma\cap(k+\Gamma))}{\mathop{\textup{card}}\Gamma}\, e(k,\tau); \quad \tau\in\widehat{\mathcal{L}}. \]

\item[(b)]
For any $\psi\in\textup{L}^2(G)$ we have
\[ \frac{1}{\mathop{\textup{card}}\Gamma}\Big\|\sum_{k\in\Gamma} T_k\psi\Big\|_{\textup{L}^2(G)}^2
= \int_{\widehat{\mathcal{L}}} g_\Gamma(\tau) p_\psi(\tau) d\tau. \]

\end{itemize}
\end{lemma}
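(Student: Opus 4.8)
The plan is to treat the two parts separately, with part~(a) an exercise in multiplying out a squared modulus and part~(b) a Plancherel computation followed by an unfolding of the integral over $\widehat{G}$ to the quotient $\widehat{G}/\mathcal{L}^\perp$.

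For part~(a), nonnegativity is immediate from the defining formula \eqref{eq:gfunctdef}, and continuity follows because each $e(k,\cdot)$ is a (continuous) character of $\widehat{\mathcal{L}}$, so $g_\Gamma$ is a finite sum of products of continuous functions. Evenness is read off from $e(k,-\tau)=\overline{e(k,\tau)}$, a consequence of \eqref{eq:bicharhom2}, since conjugating the inner sum does not change its modulus. The expansion is the heart of~(a): I would write $\big|\sum_{k\in\Gamma}e(k,\tau)\big|^2=\sum_{k,l\in\Gamma}e(k,\tau)\overline{e(l,\tau)}$ and use \eqref{eq:bicharhom1}--\eqref{eq:bicharhom2} to collapse $e(k,\tau)\overline{e(l,\tau)}=e(k-l,\tau)$. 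Collecting the terms according to the value of the difference $j:=k-l$, the coefficient of $e(j,\tau)$ becomes the number of pairs $(k,l)\in\Gamma\times\Gamma$ with $k-l=j$, i.e.\ $\mathop{\textup{card}}\big(\Gamma\cap(\Gamma-j)\big)$; the bijection $x\mapsto x-j$ identifies this with $\mathop{\textup{card}}\big(\Gamma\cap(j+\Gamma)\big)$, which gives exactly the claimed formula after dividing by $\mathop{\textup{card}}\Gamma$. Each such coefficient lies in $[0,1]$ because $\Gamma\cap(j+\Gamma)\subseteq\Gamma$, and the $j=0$ coefficient equals $1$. Finally, $g_\Gamma$ integrates to $1$ either by orthonormality of the characters $e(k,\cdot)$ in $\textup{L}^2(\widehat{\mathcal{L}})$ (only the $j=0$ term survives, using the normalization $\lambda_{\widehat{\mathcal{L}}}(\widehat{\mathcal{L}})=1$), or directly from the expansion just obtained.

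For part~(b), I would invoke that the Fourier transform is unitary from $\textup{L}^2(G)$ onto $\textup{L}^2(\widehat{G})$, together with the elementary identity $\widehat{T_k\psi}(\xi)=\overline{E_G(k,\xi)}\,\widehat{\psi}(\xi)$ obtained from the translation invariance of the Haar measure and \eqref{eq:bicharhom1}. Consequently $\big\|\sum_{k\in\Gamma}T_k\psi\big\|_{\textup{L}^2(G)}^2=\int_{\widehat{G}}\big|\sum_{k\in\Gamma}E_G(k,\xi)\big|^2|\widehat{\psi}(\xi)|^2\,d\xi$, the conjugation in $\widehat{T_k\psi}$ being immaterial inside the modulus. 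The point is that by \eqref{eq:smallandbige} the factor $\big|\sum_{k\in\Gamma}E_G(k,\xi)\big|^2$ depends only on $\tau=\xi|_\mathcal{L}$, hence only on the coset $\xi+\mathcal{L}^\perp$, and equals $\mathop{\textup{card}}(\Gamma)\,g_\Gamma(\tau)$. Being $\mathcal{L}^\perp$-periodic, this factor can be pulled outside the inner summation when the integral over $\widehat{G}$ is unfolded to $\widehat{G}/\mathcal{L}^\perp$ by the same quotient integration formula already used above to compute $\int p_\psi$; the inner sum $\sum_{m\in\mathcal{L}^\perp}|\widehat{\psi}(\xi+m)|^2$ then reassembles precisely into $p_\psi(\xi)$. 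Dividing by $\mathop{\textup{card}}\Gamma$ and transporting the integral to $\widehat{\mathcal{L}}$ via \eqref{eq:groupiso2l} yields the asserted identity.

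The routine parts --- the modulus expansion, the Plancherel step, and the formula for $\widehat{T_k\psi}$ --- present no real difficulty. The only step requiring care is the unfolding in~(b): one must check that $g_\Gamma$ genuinely descends to $\widehat{G}/\mathcal{L}^\perp$ before pulling it out of the periodization sum, and that the interchange of integration and summation is legitimate. Since $|\widehat{\psi}|^2\ge 0$ everything in sight is nonnegative, so Tonelli's theorem applies without any integrability concern, and the whole identity holds in $[0,+\infty]$ regardless of whether the common value is finite.
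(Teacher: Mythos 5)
Your proposal is correct and follows essentially the same route as the paper's proof: expanding the squared modulus via \eqref{eq:bicharhom1}--\eqref{eq:bicharhom2} and collecting terms by differences for part~(a), and Plancherel plus the translation formula $\widehat{T_k\psi}=\overline{E_G(k,\cdot)}\widehat{\psi}$ followed by the quotient (Weil) integration formula for part~(b). Your added remarks --- that the conjugation is immaterial inside the modulus, that $g_\Gamma$ descends to $\widehat{G}/\mathcal{L}^\perp$ before unfolding, and that Tonelli handles the interchange --- are exactly the points the paper treats implicitly, so nothing is missing.
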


\begin{proof}[Proof of Lemma~\ref{lemma:gdefprop}]
(a) The first three mentioned properties of $g_\Gamma$ are obvious. We can use the well-known fact that the characters of $\widehat{\mathcal{L}}$ form an orthonormal basis for $\textup{L}^2(\widehat{\mathcal{L}})$ to also conclude
\[ \int_{\widehat{\mathcal{L}}} g_\Gamma(\tau) d\tau = \frac{1}{\mathop{\textup{card}}\Gamma} \Big\|\sum_{k\in\Gamma}e(k,\cdot)\Big\|_{\textup{L}^2(\widehat{\mathcal{L}})}^2 = \frac{1}{\mathop{\textup{card}}\Gamma} \sum_{k\in\Gamma}1^2 = 1. \]
Furthermore, expanding the expression from definition \eqref{eq:gfunctdef} and using \eqref{eq:bicharhom1} and \eqref{eq:bicharhom2} we get
\begin{align*}
g_\Gamma(\tau) & = \frac{1}{\mathop{\textup{card}}\Gamma} \sum_{k_1,k_2\in\Gamma} e(k_1,\tau)\overline{e(k_2,\tau)}
= \frac{1}{\mathop{\textup{card}}\Gamma} \sum_{k_1,k_2\in\Gamma} e(k_1-k_2,\tau) \\
& = \frac{1}{\mathop{\textup{card}}\Gamma} \,\sum_{k\in\mathcal{L}} \Big(\sum_{\substack{k_1,k_2\in\Gamma\\k_1-k_2=k}}1\Big)\, e(k,\tau)
= \sum_{k\in\mathcal{L}} \frac{\mathop{\textup{card}}(\Gamma\cap(k+\Gamma))}{\mathop{\textup{card}}\Gamma}\, e(k,\tau).
\end{align*}
Observe that the summation in $k$ is actually taken over a finite set $\Gamma-\Gamma\subset\mathcal{L}$, so $g_\Gamma$ really is a generalized trigonometric polynomial.

(b) It is readily verified that
\[ (\widehat{T_y f})(\xi) := \overline{E_G(y,\xi)} \widehat{f}(\xi); \quad \xi\in\widehat{G} \]
for $f\in\textup{L}^2(G)$ by an easy application of formula \eqref{eq:fourierdef} on a dense subspace $\textup{L}^1(G)\cap\textup{L}^2(G)$. This fact combined with unitarity of the Fourier transform enables us the following computation for a nonempty finite subset $\Gamma$ of $\mathcal{L}$:
\begin{align*}
& \Big\|\sum_{k\in\Gamma} T_k\psi\Big\|_{\textup{L}^2(G)}^2 = \Big\|\sum_{k\in\Gamma} \widehat{T_k\psi}\Big\|_{\textup{L}^2(\widehat{G})}^2
= \int_{\widehat{G}} \underbrace{\Big|\sum_{k\in\Gamma}E_G(k,\xi)\Big|^2}_{\text{$\mathcal{L}^\perp$-periodic}} |\widehat{\psi}(\xi)|^2 d\xi \\
& = \int_{\widehat{G}/\mathcal{L}^\perp} \Big|\sum_{k\in\Gamma}E_G(k,\xi)\Big|^2 \Big(\sum_{m\in\mathcal{L}^\perp}|\widehat{\psi}(\xi+m)|^2\Big) d\xi
= \int_{\widehat{\mathcal{L}}} \Big|\sum_{k\in\Gamma}e(k,\tau)\Big|^2 p_\psi(\tau) d\tau.
\end{align*}
It remains to divide by $\mathop{\textup{card}}\Gamma$ and apply the definition of $g_\Gamma$.
\end{proof}

Now we can characterize democracy in terms of the periodization function $p_\psi$.

\begin{theorem}\label{thm:abstractcharppsi}
The system $\mathcal{F}_\psi$ is democratic if and only if $p_\psi$ is essentially bounded (i.e., $p_\psi\in\textup{L}^\infty(\widehat{\mathcal{L}})$) and
\begin{equation}\label{eq:basiccharppsi}
\inf_{\Gamma} \int_{\widehat{\mathcal{L}}} g_\Gamma(\tau) p_\psi(\tau) d\tau > 0,
\end{equation}
where the infimum is taken over all nonempty finite sets $\Gamma\subset\mathcal{L}$.
\end{theorem}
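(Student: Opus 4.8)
The plan is to combine the two results already at our disposal. By Theorem~\ref{thm:abstractchargamma} the system $\mathcal{F}_\psi$ is democratic precisely when the quantities $\|\sum_{k\in\Gamma}T_k\psi\|_{\textup{L}^2(G)}^2$ are comparable to $\mathop{\textup{card}}\Gamma$, uniformly in $\Gamma$, and by part (b) of Lemma~\ref{lemma:gdefprop} this normalized quantity equals $\int_{\widehat{\mathcal{L}}}g_\Gamma p_\psi$. Hence democracy is equivalent to the existence of constants $0<c\le C<\infty$ with
\[ c \le \int_{\widehat{\mathcal{L}}} g_\Gamma(\tau)\,p_\psi(\tau)\,d\tau \le C \]
for every nonempty finite $\Gamma\subset\mathcal{L}$. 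The existence of the lower constant $c$ is literally the assertion \eqref{eq:basiccharppsi}, so it remains to show that the existence of the upper constant $C$, that is $\sup_\Gamma\int_{\widehat{\mathcal{L}}}g_\Gamma p_\psi<\infty$, is equivalent to $p_\psi\in\textup{L}^\infty(\widehat{\mathcal{L}})$.

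One implication is immediate: since $g_\Gamma\ge 0$ and $\int_{\widehat{\mathcal{L}}}g_\Gamma=1$ by Lemma~\ref{lemma:gdefprop}(a), essential boundedness of $p_\psi$ yields $\int_{\widehat{\mathcal{L}}}g_\Gamma p_\psi\le\|p_\psi\|_{\textup{L}^\infty(\widehat{\mathcal{L}})}$ for all $\Gamma$, so the upper constant can be taken to be $\|p_\psi\|_{\textup{L}^\infty(\widehat{\mathcal{L}})}$ itself.

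For the converse I would argue by contraposition: assuming $p_\psi\notin\textup{L}^\infty$, I would produce sets $\Gamma$ for which $\int g_\Gamma p_\psi$ is arbitrarily large. The natural idea is to use $g_\Gamma$ as an approximate identity concentrating its unit mass near a Lebesgue point $\tau_0$ where $p_\psi(\tau_0)$ is huge, which would force the integral to be large. The difficulty, which I expect to be the main obstacle, is that the coefficients of $g_\Gamma$ in \eqref{eq:gfunctdef} are real and nonnegative, so $g_\Gamma$ always attains its maximum at the trivial character and can never be centered at an arbitrary $\tau_0$. I would circumvent this by choosing $\Gamma$ inside a Bohr-type set $\{k\in\mathcal{L}:|e(k,\tau_0)-1|<\eta\}$; for such $\Gamma$ the exponential sum $\sum_{k\in\Gamma}e(k,\tau)$ is nearly in phase both at $\tau=0$ and at $\tau=\tau_0$, so a positive fraction of the unit mass of $g_\Gamma$, depending only on $\eta$, sits in a small neighborhood of $\tau_0$. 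Since $p_\psi\ge 0$, the mass accumulating near the trivial character and the other nearly aligned points only increases the integral, and one obtains $\int g_\Gamma p_\psi\gtrsim_\eta p_\psi(\tau_0)$, which is unbounded as $\tau_0$ ranges over Lebesgue points with large values.

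Two auxiliary points must be secured to make this rigorous. First, the Bohr-type sets in $\mathcal{L}$ have to be abundant enough to supply $\Gamma$ realizing the desired in-phase behavior; this is an equidistribution (recurrence) statement for characters on the compact group $\widehat{\mathcal{L}}$, and it is precisely here that the structure of $\mathcal{L}$ genuinely enters. Second, one must keep $\Gamma$ sparse relative to its diameter, so that the peak at the trivial character carries only a small share of the mass and the fraction captured near $\tau_0$ stays bounded below by a constant. Granting these, $\sup_\Gamma\int_{\widehat{\mathcal{L}}}g_\Gamma p_\psi=\infty$ whenever $p_\psi\notin\textup{L}^\infty$, which establishes the remaining equivalence and completes the characterization.
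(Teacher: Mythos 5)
Your first half coincides with the paper's proof: the reduction via Theorem~\ref{thm:abstractchargamma} and Lemma~\ref{lemma:gdefprop}(b) to the statement that $\sup_\Gamma\int_{\widehat{\mathcal{L}}}g_\Gamma p_\psi<\infty$ if and only if $p_\psi\in\textup{L}^\infty(\widehat{\mathcal{L}})$, and the easy implication using $\|g_\Gamma\|_{\textup{L}^1(\widehat{\mathcal{L}})}=1$, are exactly what the paper does. For the hard implication the paper takes a different and much shorter route than the one you sketch: it invokes Lemma~\ref{lemma:banachselection} to self-improve the democratic upper bound (at the cost of a factor $\pi^2$) to sums $\sum_{k\in\Gamma}\alpha_k T_k\psi$ with arbitrary coefficients $|\alpha_k|\leq 1$, and the choice $\alpha_k=e(k,\sigma)$ is precisely a modulation recentering $g_\Gamma$ at $\sigma$, which gives $(g_\Gamma\ast p_\psi)(\sigma)\leq \pi^2C_\psi$ for every $\sigma$; then Lemma~\ref{lemma:folnersequence} and an a.e.\ convergent subsequence finish the job. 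This is exactly the device that dissolves the obstruction you correctly identified (that $g_\Gamma$ always peaks at the trivial character); you discarded it and tried to do without.

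Your substitute contains a genuine gap: the central claim, that $\Gamma$ contained in the Bohr set $\{k\in\mathcal{L}:|e(k,\tau_0)-1|<\eta\}$ forces a fraction $c(\eta)$ of the unit mass of $g_\Gamma$ into a prescribed small neighborhood of $\tau_0$, is false as stated, and your prescription to keep $\Gamma$ \emph{sparse} relative to its diameter makes matters worse, not better. Take $\mathcal{L}=\mathbb{Z}$, positive integers $q,m$, and any $\tau_0$ with $|\tau_0-p/q|\leq\eta/(2\pi mq)$ for some integer $p$. Then $\Gamma=\{0,q,2q,\ldots,(m-1)q\}$ lies in the Bohr set and is as sparse as you like, but $g_\Gamma(\tau)=\frac{1}{m}\big|\sum_{j=0}^{m-1}e^{2\pi ijq\tau}\big|^2$ is $\frac{1}{q}$-periodic, so \emph{every} interval of length $\frac{1}{q}$, in particular the one containing $\tau_0$, receives mass exactly $\frac{1}{q}$, no matter how large $m$ is. Since $q$ is dictated by $\tau_0$ and by the (Lebesgue-point) scale of the neighborhood you must hit, no bound of the form $\int g_\Gamma p_\psi\gtrsim_\eta p_\psi(\tau_0)$ follows from Bohr containment plus sparseness. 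What actually works is the opposite: take $\Gamma$ to be the \emph{entire} Bohr set intersected with a long F{\o}lner set $F$, so that $\Gamma-\Gamma$ stays in the doubled Bohr set (hence every nonnegative Fourier coefficient $v_\Gamma(k)$ is paired with $\textup{Re}\,e(k,\tau_0)\geq 1-2\eta^2$) while $\sum_k v_\Gamma(k)=\mathop{\textup{card}}\Gamma$; testing against a Fej\'{e}r-type kernel of width comparable to the inverse diameter of $F$ then yields mass $\gtrsim\eta$ near $\tau_0$, \emph{provided} $\mathop{\textup{card}}\Gamma\gtrsim\eta\mathop{\textup{card}}F$. That proviso is your equidistribution input, and it must be made uniform in $\tau_0$ (elements of finite order in $\widehat{\mathcal{L}}$ need a separate count). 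Finally, ``Lebesgue point'' is not free on a general compact abelian group $\widehat{\mathcal{L}}$: there is no Besicovitch--Vitali theory there, so a differentiation statement has to be built by hand (e.g.\ along the F{\o}lner approximate identity). None of these steps is carried out in your proposal, and the concentration mechanism as you stated it is refuted by the progression example above.
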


Before the proof we will establish two auxiliary result. As the first ingredient we need an appropriate analogue of the classical Dirichlet kernel on $\mathbb{T}$, which was used in \cite[Thm.~4.7]{HNSS13} for the same purpose, and the classical Fej\'{e}r kernel derived from it.

The key concept that we borrow from ergodic theory is a \emph{F{\o}lner sequence} \cite{F55} for $\mathcal{L}$, which is a sequence $(F_n)_{n=1}^{\infty}$ of nonempty finite subsets of $\mathcal{L}$ such that for each $k\in\mathcal{L}$ one has
\begin{equation}\label{eq:folnerproperty}
\lim_{n\to\infty} \frac{\mathop{\textup{card}}(F_n\triangle(k+F_n))}{\mathop{\textup{card}}F_n} = 0.
\end{equation}
Here $\triangle$ denotes the symmetric difference of sets, i.e., $A\triangle B:=(A\setminus B)\cup(B\setminus A)$.
It is a well-known fact that every countable discrete abelian group possesses a F{\o}lner sequence, which is just one of many equivalent ways of saying that each countable discrete abelian group is \emph{amenable}; see \cite{P88}.
Indeed, an obvious choice of the F{\o}lner sets for $\mathbb{Z}$ are discrete intervals $F_n=\{-N,\ldots,N\}$. One similarly verifies that every finitely generated abelian group is amenable. Finally, while considering an arbitrary (not necessarily finitely generated) countable discrete abelian group one only needs to observe that the direct limit of an increasing sequence of countable amenable groups is also amenable; see \cite{T09} for details.

Furthermore, recall that $\textup{L}^1(\widehat{\mathcal{L}})$ is a Banach algebra with respect to the convolution as multiplication, which is in turn defined as
\[ (f\ast h)(\tau) := \int_{\widehat{\mathcal{L}}} f(\tau-\sigma) h(\sigma) d\sigma. \]
This algebra does not have an identity. We say that a sequence $(f_n)_{n=1}^{\infty}$ is an \emph{approximate identity} (a notion used for instance in \cite[Sec.~2.5]{F95} and \cite[Sec.~V.20]{HR79}) for $\textup{L}^1(\widehat{\mathcal{L}})$ if for every function $h$ in that space we have $\lim_{n\to\infty} f_n\ast h = h$ with convergence in the $\textup{L}^1$ norm.

\begin{lemma}\label{lemma:folnersequence}
If $(F_n)_{n=1}^{\infty}$ is a F{\o}lner sequence for $\mathcal{L}$, then $(g_{F_n})_{n=1}^{\infty}$ constitutes an approximate identity for $\textup{L}^1(\widehat{\mathcal{L}})$.
\end{lemma}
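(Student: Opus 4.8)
The plan is to verify the defining property of an approximate identity on the Fourier side, exploiting the explicit expansion of $g_\Gamma$ recorded in Lemma~\ref{lemma:gdefprop}(a). Since the characters $e(k,\cdot)$, $k\in\mathcal{L}$, form an orthonormal basis of $\textup{L}^2(\widehat{\mathcal{L}})$, that expansion says precisely that the Fourier coefficient of $g_\Gamma$ at $k$ equals $\mathop{\textup{card}}(\Gamma\cap(k+\Gamma))/\mathop{\textup{card}}\Gamma$. First I would rewrite this using $\mathop{\textup{card}}(F_n\cap(k+F_n)) = \mathop{\textup{card}}F_n - \tfrac12\mathop{\textup{card}}(F_n\triangle(k+F_n))$, so that the F\o{}lner condition \eqref{eq:folnerproperty} turns directly into the statement that for every fixed $k\in\mathcal{L}$ the $k$-th Fourier coefficient of $g_{F_n}$ tends to $1$ as $n\to\infty$.

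Next I would record the elementary fact that convolving an $\textup{L}^1$ function $f$ with a character reproduces that character scaled by the corresponding Fourier coefficient; using the homomorphism property of $e(k,\cdot)$ and translation invariance of the Haar measure one computes $f\ast e(k,\cdot) = \big(\int_{\widehat{\mathcal{L}}} f(\rho)\overline{e(k,\rho)}\,d\rho\big)\,e(k,\cdot)$. Applied to $f=g_{F_n}$ and combined with the previous step this gives $g_{F_n}\ast e(k,\cdot)\to e(k,\cdot)$ in $\textup{L}^1(\widehat{\mathcal{L}})$, the convergence being immediate since $\|e(k,\cdot)\|_{\textup{L}^1(\widehat{\mathcal{L}})}=1$ and the scalar factor tends to $1$. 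By linearity the desired convergence $g_{F_n}\ast h\to h$ then holds for every trigonometric polynomial $h$.

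Finally I would promote this to all of $\textup{L}^1(\widehat{\mathcal{L}})$ by the standard density-plus-uniform-bound argument. The convolution operators $h\mapsto g_{F_n}\ast h$ all have operator norm at most $\|g_{F_n}\|_{\textup{L}^1(\widehat{\mathcal{L}})}=1$ on $\textup{L}^1(\widehat{\mathcal{L}})$, the norm being exactly $1$ because $g_{F_n}\geq0$ integrates to $1$ by Lemma~\ref{lemma:gdefprop}(a). Since trigonometric polynomials are dense in $\textup{L}^1(\widehat{\mathcal{L}})$ (via Stone--Weierstrass density in $C(\widehat{\mathcal{L}})$ together with density of continuous functions), given $h$ and $\varepsilon>0$ one picks a polynomial $P$ with $\|h-P\|_{\textup{L}^1(\widehat{\mathcal{L}})}<\varepsilon$ and estimates $\|g_{F_n}\ast h - h\|_{\textup{L}^1(\widehat{\mathcal{L}})}\le \|g_{F_n}\|_{\textup{L}^1(\widehat{\mathcal{L}})}\|h-P\|_{\textup{L}^1(\widehat{\mathcal{L}})} + \|g_{F_n}\ast P - P\|_{\textup{L}^1(\widehat{\mathcal{L}})} + \|P - h\|_{\textup{L}^1(\widehat{\mathcal{L}})}$; letting $n\to\infty$ and then $\varepsilon\to0$ finishes the proof. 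The only genuinely substantive point is the first paragraph---translating the combinatorial F\o{}lner condition into convergence of the Fourier coefficients to $1$---while the last two paragraphs are the routine summability-kernel machinery, with the uniform $\textup{L}^1$-bound equal to $1$ being exactly what makes the density argument go through cleanly.
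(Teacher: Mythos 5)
Your proof is correct and follows essentially the same route as the paper's: both verify convergence on generalized trigonometric polynomials by combining the Fourier expansion of $g_\Gamma$ from Lemma~\ref{lemma:gdefprop}(a) with the F{\o}lner property (your exact identity $\mathop{\textup{card}}(F_n\cap(k+F_n)) = \mathop{\textup{card}}F_n - \tfrac12\mathop{\textup{card}}(F_n\triangle(k+F_n))$ versus the paper's estimate via $\mathop{\textup{card}}(F_n\setminus(k+F_n))$ is an immaterial difference), and then conclude by the same density-plus-$\|g_{F_n}\|_{\textup{L}^1(\widehat{\mathcal{L}})}=1$ three-term argument. Working one character at a time and extending by linearity, as you do, is just a repackaging of the paper's single computation of $g_{F_n}\ast q$.
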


\begin{proof}[Proof of Lemma~\ref{lemma:folnersequence}]
Let us begin by taking a generalized trigonometric polynomial
\[ q(\tau) = \sum_{k\in\Gamma} \alpha_k e(k,\tau) \]
for a finite set $\Gamma\subset\mathcal{L}$ and some complex coefficients $(\alpha_k)_{k\in\Gamma}$.
Observe that by part (a) of Lemma~\ref{lemma:gdefprop} we have
\[ (g_{F_n}\!\ast q)(\tau) = \sum_{k\in\Gamma} \frac{\mathop{\textup{card}}(F_n\cap(k+F_n))}{\mathop{\textup{card}}F_n} \,\alpha_k e(k,\tau) \]
and thus also
\[ q(\tau) - (g_{F_n}\!\ast q)(\tau) = \sum_{k\in\Gamma} \frac{\mathop{\textup{card}}(F_n\setminus(k+F_n))}{\mathop{\textup{card}}F_n} \,\alpha_k e(k,\tau), \]
so we can estimate
\begin{equation}\label{eq:folnerauxest}
\| g_{F_n}\!\ast q - q \|_{\textup{L}^1(\widehat{\mathcal{L}})} \leq \sum_{k\in\Gamma}
\frac{\mathop{\textup{card}}(F_n\setminus(k+F_n))}{\mathop{\textup{card}}F_n} |\alpha_k|.
\end{equation}
For each $k\in\Gamma$ the corresponding term in \eqref{eq:folnerauxest} converges to $0$ by the F{\o}lner property \eqref{eq:folnerproperty}, which implies
\begin{equation}\label{eq:folnerauxest2}
\lim_{n\to\infty} \| g_{F_n}\!\ast q - q \|_{\textup{L}^1(\widehat{\mathcal{L}})} = 0.
\end{equation}

Now take an arbitrary $h\in\textup{L}^1(\widehat{\mathcal{L}})$ and an $\varepsilon>0$. By density there exists a generalized trigonometric polynomial $q$ such that $\|q-h\|_{\textup{L}^1(\widehat{\mathcal{L}})}<\varepsilon/3$, while by \eqref{eq:folnerauxest2} there exists a positive integer $n_0$ such that $n\geq n_0$ implies $\|g_{F_n}\!\ast q-q\|_{\textup{L}^1(\widehat{\mathcal{L}})}<\varepsilon/3$. Therefore, for each index $n\geq n_0$ we have
\[ \| g_{F_n}\!\ast h - h \|_{\textup{L}^1(\widehat{\mathcal{L}})}
\leq \| g_{F_n} \|_{\textup{L}^1(\widehat{\mathcal{L}})} \| h - q \|_{\textup{L}^1(\widehat{\mathcal{L}})}
+ \| g_{F_n}\!\ast q - q \|_{\textup{L}^1(\widehat{\mathcal{L}})}
+ \| q - h \|_{\textup{L}^1(\widehat{\mathcal{L}})}
< \varepsilon. \qedhere \]
\end{proof}

Existence of approximate identities as in Lemma~\ref{lemma:folnersequence} at the level of general countable abelian lattices was named the \emph{Fej\'{e}r property} in \cite{N15}. It was the working assumption in that paper.

The following lemma is a certain folklore and it appears (in some form and with a larger constant) in many texts on Banach spaces; for instance see \cite[Sec.~3.4]{H11}. For completeness we give its elegant self-contained proof.

\begin{lemma}\label{lemma:banachselection}
For any finite system of vectors $(f_k)_{k\in\Gamma}$ in a complex Banach space $(X,\|\cdot\|)$ and any finite system of coefficients $(\alpha_k)_{k\in\Gamma}$ satisfying $|\alpha_k|\leq 1$ for each $k\in\Gamma$, we have
\[ \Big\|\sum_{k\in\Gamma} \alpha_k f_k\Big\| \leq \pi \max_{\Gamma'\subseteq\Gamma} \Big\|\sum_{k\in\Gamma'} f_k\Big\|, \]
where the maximum is taken over all subsets $\Gamma'$ of $\Gamma$.
\end{lemma}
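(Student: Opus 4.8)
\emph{Proof proposal.} The plan is to realize the vector $\sum_{k\in\Gamma}\alpha_k f_k$ as a \emph{continuous average of subset-sums} $\sum_{k\in\Gamma'}f_k$, so that the triangle inequality for vector-valued integrals converts the desired bound into a purely numerical estimate on the total mass of the averaging measure. Since subsets correspond to coefficients in $\{0,1\}$ while the prescribed $\alpha_k$ fill the whole unit disc, I would split the argument into two independent reductions: first replace disc coefficients by unimodular ones, and then represent unimodular coefficients through indicators of half-planes.

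For the first reduction I would observe that the closed polydisc is the convex hull of the torus in a completely elementary fashion. Writing each $\alpha_k=\tfrac12(u_k+v_k)$ with $|u_k|=|v_k|=1$ (for $\alpha_k=r_ke^{i\beta_k}$ simply take $u_k=e^{i(\beta_k+\gamma_k)}$, $v_k=e^{i(\beta_k-\gamma_k)}$ with $\cos\gamma_k=r_k$, and $u_k=1,v_k=-1$ if $\alpha_k=0$), the vector $(\alpha_k)_{k\in\Gamma}$ becomes the uniform average, over the $2^{|\Gamma|}$ sign patterns $\sigma$, of the unimodular vectors $(w^\sigma_k)_k$ defined by $w^\sigma_k\in\{u_k,v_k\}$, because each of $u_k,v_k$ occurs in exactly half of the patterns. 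Convexity of the norm then yields
\[
\Big\|\sum_{k\in\Gamma}\alpha_k f_k\Big\|\le\max_{|\epsilon_k|=1}\Big\|\sum_{k\in\Gamma}\epsilon_k f_k\Big\|,
\]
so it suffices to treat unimodular coefficients $\epsilon_k=e^{i\beta_k}$. For the second reduction I would use the identity $\epsilon_k=\tfrac12\int_0^{2\pi}e^{i\theta}\,\mathbbm{1}[\operatorname{Re}(e^{-i\theta}\epsilon_k)>0]\,d\theta$, which is the one-line computation $\tfrac12\int_{\beta_k-\pi/2}^{\beta_k+\pi/2}e^{i\theta}\,d\theta=e^{i\beta_k}$. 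Setting $S(\theta):=\{k\in\Gamma:\operatorname{Re}(e^{-i\theta}\epsilon_k)>0\}$ and noting that $\theta\mapsto\sum_{k\in S(\theta)}f_k$ is a step function taking finitely many values, this gives the honest (finite-sum) representation $\sum_{k\in\Gamma}\epsilon_k f_k=\tfrac12\int_0^{2\pi}e^{i\theta}\big(\sum_{k\in S(\theta)}f_k\big)\,d\theta$. The triangle inequality together with $\tfrac12\int_0^{2\pi}|e^{i\theta}|\,d\theta=\pi$ then bounds the left-hand side by $\pi\max_{\Gamma'\subseteq\Gamma}\|\sum_{k\in\Gamma'}f_k\|$, and combining with the first reduction finishes the proof.

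The main obstacle is securing the sharp constant $\pi$ rather than a larger one. A naive representation of $\alpha_k$ through its positive part $(\operatorname{Re}(e^{-i\theta}\alpha_k))^+$ together with a radial threshold does produce subset-sums, but its averaging measure has total mass $4$, not $\pi$. The gain comes precisely from (i) first passing to unimodular coefficients, since half-plane indicators cannot detect the modulus $r_k$ and a direct approach would otherwise lose the magnitude, and (ii) using half-planes with the \emph{constant} weight $\tfrac12 e^{i\theta}$ on each semicircle rather than a $\cos$-weighted profile. That $\pi$ cannot be improved is visible from $\Gamma=\{0,\dots,N-1\}$, $f_k=e^{2\pi i k/N}$, $\alpha_k=\overline{f_k}$, where the ratio of the two sides tends to $\pi$ as $N\to\infty$.
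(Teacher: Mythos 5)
Your proof is correct, but it takes a genuinely different route from the paper's. The paper dualizes: using $\|f\|=\sup_{\varphi\in X^\ast,\,\|\varphi\|\leq 1}|\varphi(f)|$ it reduces the lemma to the scalar inequality $\sum_{k\in\Gamma}|\varphi(f_k)|\leq\pi\max_{\Gamma'\subseteq\Gamma}\big|\varphi\big(\sum_{k\in\Gamma'}f_k\big)\big|$, proved via the identity $\int_{\mathbb{T}}\max\{\operatorname{Re}(e^{2\pi i\theta}\beta),0\}\,d\theta=|\beta|/\pi$ with subsets $\Gamma'_\theta$ determined by the signs of the numbers $\varphi(f_k)$; this handles all $|\alpha_k|\leq 1$ in one stroke, because the absolute values $|\varphi(f_k)|$ absorb the moduli. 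You instead work entirely on the primal side: first the convexity reduction to unimodular coefficients (each $\alpha_k$ as the midpoint of two unimodular numbers, averaged over sign patterns), then the vector-valued representation $\sum_{k}\epsilon_k f_k=\tfrac12\int_0^{2\pi}e^{i\theta}\big(\sum_{k\in S(\theta)}f_k\big)\,d\theta$, in which the subsets $S(\theta)$ depend only on the coefficients and not on any functional, and the integrand is an honest step function. Both arguments are at heart the same semicircle-averaging trick and both give the sharp constant $\pi$; the trade-off is that your version avoids the dual norm characterization (hence Hahn--Banach) and exhibits an explicit decomposition of the target vector into subset sums with total averaging mass $\pi$, at the cost of the extra unimodular reduction, whereas the paper's version is shorter precisely because that reduction is unnecessary on the dual side and it proves the stronger functional inequality along the way. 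Your optimality example is essentially the one the paper records in the remark following the lemma ($2n$ equally spaced unimodular points), so that part matches.
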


\begin{proof}[Proof of Lemma~\ref{lemma:banachselection}]
By the dual characterization of the norm,
\[ \|f\| = \sup_{\varphi\in X^\ast,\,\|\varphi\|\leq1} |\varphi(f)|, \]
it is enough to show that for each continuous linear functional $\varphi$ one has
\begin{equation}\label{eq:absfunctional}
\sum_{k\in\Gamma} |\varphi(f_k)| \leq \pi \max_{\Gamma'\subseteq\Gamma} \Big|\varphi\Big(\sum_{k\in\Gamma'}f_k\Big)\Big|.
\end{equation}
Indeed, by the assumption on the coefficients $\alpha_k$ we can then estimate
\begin{align*}
\Big\|\sum_{k\in\Gamma} \alpha_k f_k\Big\|
& = \sup_{\varphi\in X^\ast,\,\|\varphi\|\leq1} \Big|\sum_{k\in\Gamma} \alpha_k \varphi(f_k)\Big|
\leq \sup_{\varphi\in X^\ast,\,\|\varphi\|\leq1} \sum_{k\in\Gamma} |\varphi(f_k)| \\
& \leq \pi \sup_{\varphi\in X^\ast,\,\|\varphi\|\leq1} \max_{\Gamma'\subseteq\Gamma} \Big|\varphi\Big(\sum_{k\in\Gamma'}f_k\Big)\Big|
\leq \pi \max_{\Gamma'\subseteq\Gamma} \Big\|\sum_{k\in\Gamma'} f_k\Big\|.
\end{align*}

In order to show \eqref{eq:absfunctional}, observe that for every $\beta\in\mathbb{C}$ we have the identity
\[ \int_{\mathbb{T}} \max\{ \textup{Re}(e^{2\pi i\theta}\beta), 0 \} d\theta
= \int_{-1/4}^{1/4} |\beta|\cos(2\pi\theta) d\theta = \frac{|\beta|}{\pi}. \]
Therefore, if for each $\theta\in\mathbb{T}$ we define
\[ \Gamma'_{\theta} := \big\{k\in\Gamma \,:\, \textup{Re}\big(e^{2\pi i\theta}\varphi(f_k)\big) \geq 0\big\}, \]
then
\begin{align*}
\sum_{k\in\Gamma} |\varphi(f_k)|
& = \pi \int_{\mathbb{T}} \sum_{k\in\Gamma'_{\theta}} \textup{Re}\big(e^{2\pi i\theta}\varphi(f_k)\big) d\theta
= \pi \int_{\mathbb{T}} \textup{Re}\Big(e^{2\pi i\theta} \varphi\Big(\sum_{k\in\Gamma'_{\theta}}f_k\Big) \Big) d\theta \\
& \leq \pi \int_{\mathbb{T}} \Big| \varphi\Big(\sum_{k\in\Gamma'_{\theta}}f_k\Big) \Big| d\theta
\leq \pi \max_{\Gamma'\subseteq\Gamma} \Big|\varphi\Big(\sum_{k\in\Gamma'}f_k\Big)\Big|. \qedhere
\end{align*}
\end{proof}

It is interesting to observe that the constant $\pi$ in Lemma~\ref{lemma:banachselection} is optimal, already in the one-dimensional case $X=\mathbb{C}$.
Indeed, for a positive integer $n$ take $\Gamma:=\{0,1,\ldots,2n-1\}$, $f_k:=e^{\pi i k/n}$, and $\alpha_k:=e^{-\pi i k/n}$.
It can be easily seen that the largest absolute value of the sum over a subset equals $|\sum_{k=0}^{n-1}f_k|=1/\sin(\pi/2n)$ and its ratio to the left hand side $|\sum_{k=0}^{2n-1} \alpha_k f_k|=2n$ converges to $1/\pi$ as $n\to\infty$.

Finally, we are ready to give the proof of the desired characterization of democracy, following the outline from \cite{HNSS13}.

\begin{proof}[Proof of Theorem~\ref{thm:abstractcharppsi}]
We will reduce the claim to Theorem~\ref{thm:abstractchargamma}. By part (b) of Lemma~\ref{lemma:gdefprop} we immediately see that the left inequality in \eqref{eq:basicchargamma} is equivalent to condition \eqref{eq:basiccharppsi}, so it remains to show that the right inequality is equivalent with essential boundedness of $p_\psi$.

One implication is trivial, as $p_\psi\in\textup{L}^\infty(\widehat{\mathcal{L}})$ together with part (b) of Lemma~\ref{lemma:gdefprop} guarantees
\[ \frac{1}{\mathop{\textup{card}}\Gamma} \Big\|\sum_{k\in\Gamma} T_k\psi\Big\|_{\textup{L}^2(G)}^2
\leq \|g_\Gamma\|_{\textup{L}^1(\widehat{\mathcal{L}})} \|p_\psi\|_{\textup{L}^\infty(\widehat{\mathcal{L}})}
= \|p_\psi\|_{\textup{L}^\infty(\widehat{\mathcal{L}})} < +\infty \]
for every finite subset $\Gamma$ of $\mathcal{L}$.

Conversely, suppose that $\psi$ is such that the right inequality in \eqref{eq:basicchargamma} holds with some finite constant $C_\psi$. By Lemma~\ref{lemma:banachselection} for any finite set $\Gamma\subset\mathcal{L}$ and any coefficients $(\alpha_k)_{k\in\Gamma}$ satisfying $|\alpha_k|\leq 1$ we also have
\[ \Big\|\sum_{k\in\Gamma} \alpha_k T_k\psi\Big\|_{\textup{L}^2(G)}^2 \leq C\mathop{\textup{card}}\Gamma, \]
where $C:=\pi^2 C_\psi$.
The same computation from the proof of Lemma~\ref{lemma:gdefprop} now gives
\[ \int_{\widehat{\mathcal{L}}} \frac{1}{\mathop{\textup{card}}\Gamma} \Big|\sum_{k\in\Gamma}\alpha_k\overline{e(k,\tau)}\Big|^2 p_\psi(\tau) d\tau \leq C \]
and the particular choice $\alpha_k = e(k,\sigma)$ for a fixed $\sigma\in\widehat{\mathcal{L}}$ simplifies to
\begin{equation}\label{eq:proppaux}
(g_\Gamma \ast p_\psi)(\sigma) = \int_{\widehat{\mathcal{L}}} g_\Gamma(\sigma-\tau) p_\psi(\tau) d\tau \leq C.
\end{equation}
Finally, we take a F{\o}lner sequence $(F_n)_{n=1}^{\infty}$ for $\mathcal{L}$ and recall that the sequence $(g_{F_n} \ast p_\psi)_{n=1}^{\infty}$ converges to $p_\psi$ in the $\textup{L}^1$ norm by Lemma~\ref{lemma:folnersequence}. There exist a subsequence $(g_{F_{n_j}} \ast p_\psi)_{j=1}^{\infty}$ that converges a.e., so by taking $\Gamma=F_{n_j}$ in \eqref{eq:proppaux} and letting $j\to\infty$ we conclude that $p_\psi \leq C$ a.e.
\end{proof}

Theorem~\ref{thm:abstractcharppsi} is still not much more practical than Theorem~\ref{thm:abstractchargamma}, as condition \eqref{eq:basiccharppsi} requires computation of a certain integral for each finite subset $\Gamma$. However, there is one thing worth noticing that has now become evident: the democratic property of the system $\mathcal{F}_\psi$ is characterized only in terms of the lattice and the periodization function. More precisely, once we are given the function $p_\psi$ on $\widehat{\mathcal{L}}$ (rather than $\widehat{G}/\mathcal{L}^\perp$), the democracy condition depends only on properties of the lattice $\mathcal{L}$ (and its bi-character $e$), but not on the ambient group $G$. In particular, this means that the general case is not any more difficult than the special case $G=\mathcal{L}$. In this latter case $\mathcal{L}^\perp$ is trivial by \eqref{eq:groupiso1l} and Theorem~\ref{thm:abstractcharppsi} calls for a characterization in terms of $p_\psi=|\widehat{\psi}|^2$, which can a priori be an arbitrary nonnegative integrable function on $\widehat{G}=\widehat{\mathcal{L}}$.

Therefore, in the remaining text we mostly work on the lattice $(\mathcal{L},+)$ and its (compact) dual group $(\widehat{\mathcal{L}},+)$. For any $A\in\mathcal{B}(\widehat{\mathcal{L}})$ we will simply write its measure as $|A|$, instead of $\lambda_{\widehat{\mathcal{L}}}(A)$, in analogy with the usual practice on the torus $\mathbb{T}$. Recall that we have chosen the normalization so that $|\widehat{\mathcal{L}}|=1$.

We end this section with a sufficient condition from democracy, which is just an adaptation of \cite[Cor.~4.20]{HNSS13}.

\begin{corollary}\label{cor:gensufficient}
If there exist constants $c,C\in\langle0,+\infty\rangle$ and an open neighborhood $U$ of $0$ in $\widehat{\mathcal{L}}$ such that
\[ p_\psi(\tau) \leq C \ \text{for a.e.}\ \tau\in\widehat{\mathcal{L}} \quad\text{and}\quad p_\psi(\tau) \geq c \ \text{for a.e.}\ \tau\in U, \]
then $\mathcal{F}_\psi$ is a democratic system.
\end{corollary}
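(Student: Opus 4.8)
The plan is to deduce the corollary from the characterization in Theorem~\ref{thm:abstractcharppsi}. The hypothesis $p_\psi\le C$ a.e.\ immediately gives $p_\psi\in\textup{L}^\infty(\widehat{\mathcal{L}})$, so only condition \eqref{eq:basiccharppsi} remains to be verified. Since $g_\Gamma\ge0$ and $p_\psi\ge c$ a.e.\ on $U$, I would first bound
\[ \int_{\widehat{\mathcal{L}}} g_\Gamma(\tau)\,p_\psi(\tau)\,d\tau \ \ge\ c\int_U g_\Gamma(\tau)\,d\tau , \]
which reduces the entire problem to producing a lower bound on $\int_U g_\Gamma$ that is \emph{uniform over all nonempty finite sets} $\Gamma\subset\mathcal{L}$.

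Next I would construct a suitable test function concentrated in $U$. Since $\widehat{\mathcal{L}}$ is a topological group, choose an open symmetric neighborhood $V$ of $0$ with $V-V\subseteq U$; as $V$ is a nonempty open subset of a compact group equipped with Haar measure, $|V|>0$. Put $\Phi:=\mathbbm{1}_V\ast\mathbbm{1}_{-V}$. This function is nonnegative and satisfies $\Phi(\tau)=|V\cap(\tau+V)|$, so $0\le\Phi\le|V|$ and $\Phi$ is supported in $V-V\subseteq U$; moreover $\int_{\widehat{\mathcal{L}}}\Phi=|V|^2$. The crucial feature is that $\Phi$ has nonnegative Fourier coefficients, $\widehat{\Phi}(k)=|\widehat{\mathbbm{1}_V}(k)|^2\ge0$, with $\widehat{\Phi}(0)=|V|^2$.

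Finally I would combine the two. Recall from part (a) of Lemma~\ref{lemma:gdefprop} that $g_\Gamma=\sum_{k\in\mathcal{L}}a_k\,e(k,\cdot)$ with $a_k=\mathop{\textup{card}}(\Gamma\cap(k+\Gamma))/\mathop{\textup{card}}\Gamma\ge0$ and $a_0=1$. Using orthonormality of the characters, and because all the coefficients involved are real and nonnegative,
\[ \int_{\widehat{\mathcal{L}}} g_\Gamma\,\Phi \ =\ \sum_{k\in\mathcal{L}} a_k\,\widehat{\Phi}(k) \ \ge\ a_0\,\widehat{\Phi}(0)\ =\ |V|^2 . \]
On the other hand, since $\Phi\le|V|$ and $\Phi$ vanishes off $U$,
\[ \int_{\widehat{\mathcal{L}}} g_\Gamma\,\Phi \ \le\ |V|\int_U g_\Gamma . \]
Comparing the two displays yields $\int_U g_\Gamma\ge|V|$ for every admissible $\Gamma$, whence $\int_{\widehat{\mathcal{L}}} g_\Gamma\,p_\psi\ge c|V|>0$ uniformly, which is exactly \eqref{eq:basiccharppsi}. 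Theorem~\ref{thm:abstractcharppsi} then finishes the proof.

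The real content, and the step I expect to be the main obstacle, is the uniform-in-$\Gamma$ lower bound on $\int_U g_\Gamma$. The approximate-identity statement of Lemma~\ref{lemma:folnersequence} only controls $g_{F_n}$ along a F{\o}lner sequence and says nothing uniform about arbitrary finite $\Gamma$, so it cannot be invoked here. The device that does the job is the positive-definite test function $\Phi$: it trades the pointwise concentration of $g_\Gamma$ near $0$ for the single, $\Gamma$-independent Fourier coefficient $a_0=1$, and the positivity of both sets of coefficients converts this into the desired uniform mass bound.
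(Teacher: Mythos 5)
Your proof is correct and follows essentially the same route as the paper: reduce to condition \eqref{eq:basiccharppsi} via Theorem~\ref{thm:abstractcharppsi}, take a symmetric open neighborhood $V$ with $V+V\subseteq U$, and use the positive-definite function $\mathbbm{1}_V\ast\mathbbm{1}_V$ (your $\Phi$), retaining only the $k=0$ Fourier coefficient to get the uniform bound $c|V|$. The paper merely writes the key step as an explicit double integral rather than in the language of Fourier coefficients of $\Phi$; the argument is the same.
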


\begin{proof}[Proof of Corollary~\ref{cor:gensufficient}]
This will be a consequence of Theorem~\ref{thm:abstractcharppsi} as soon as we show that the hypothesis $p_\psi \geq c$ a.e.\@ on $U$ implies \eqref{eq:basiccharppsi}.
By \cite[Prop.~2.1]{F95} or \cite[Thm.~4.5\&4.6]{HR79} there exists an open neighborhood $V\subseteq\widehat{\mathcal{L}}$ of $0$ such that $V+V\subseteq U$ and $-V=V$.
Its characteristic function $\mathbbm{1}_V$ vanishes outside $U$ and satisfies $0\leq(\mathbbm{1}_V\ast\mathbbm{1}_V)(\tau)\leq |V|$ for each $\tau\in\widehat{\mathcal{L}}$.
From the properties of the Haar measure we know $|V|>0$; see \cite[Lm.~9.2.5]{C13} or \cite[Rem.~15.8]{HR79}.
Finally, for each nonempty finite $\Gamma\subset\mathcal{L}$ by part (a) of Lemma~\ref{lemma:gdefprop} we have
{\allowdisplaybreaks\begin{align*}
& \int_{\widehat{\mathcal{L}}} g_\Gamma(\tau) p_\psi(\tau) d\tau \geq c \int_{U} g_\Gamma(\tau) d\tau
\geq \frac{c}{|V|}\int_{\widehat{\mathcal{L}}} g_\Gamma(\tau) (\mathbbm{1}_V\ast\mathbbm{1}_V)(\tau) d\tau \\
& = \frac{c}{|V|}\sum_{k\in\Gamma-\Gamma} \frac{\mathop{\textup{card}}(\Gamma\cap(k+\Gamma))}{\mathop{\textup{card}}\Gamma} \int_{\widehat{\mathcal{L}}}\int_{\widehat{\mathcal{L}}} e(k,\sigma) \overline{e(k,\sigma-\tau)} \mathbbm{1}_{V}(\sigma) \mathbbm{1}_{V}(\sigma-\tau) d\sigma d\tau \\
& = \frac{c}{|V|}\sum_{k\in\Gamma-\Gamma} \frac{\mathop{\textup{card}}(\Gamma\cap(k+\Gamma))}{\mathop{\textup{card}}\Gamma} \Big| \int_{\widehat{\mathcal{L}}} e(k,\sigma) \mathbbm{1}_{V}(\sigma) d\sigma \Big|^2
\geq \frac{c}{|V|} |V|^2 = c|V|,
\end{align*}}
where in the last inequality we estimated the sum of nonnegative terms by its single term for $k=0$.
Therefore,
\[ \inf_\Gamma \int_{\widehat{\mathcal{L}}} g_\Gamma(\tau) p_\psi(\tau) d\tau \geq c|V| >0, \]
as desired.
\end{proof}

Even though we would like to find more operative equivalent conditions for democratic systems of translates, obtaining these is an open problem even in the particular case when the lattice is $\mathcal{L}=\mathbb{Z}$, as commented in the paper \cite{HNSS13}.
It was conjectured in \cite{HNSS13} that it is sufficient to test the democratic property on the arithmetic progressions
\[ \Gamma = \{0,d,2d,\ldots,(m-1)d\} \]
for positive integers $d$ and $m$. In the same paper it was also conjectured that the system $\mathcal{F}_\psi$ is democratic if and only if $p_\psi\in\textup{L}^\infty(\mathbb{T})$ and
\[ \inf_{\substack{d\in\mathbb{N}\\ 0<\varepsilon<1/2}} \frac{1}{2\varepsilon}\int_{\bigcup_{j=0}^{d-1}\left[\frac{j-\varepsilon}{d},\frac{j+\varepsilon}{d}\right]} p_\psi(\tau) d\tau > 0. \]


\section{Translates along torsion lattices}
\label{sec:operativeconditions}

The goal of this section is to investigate further the conditions from Theorems~\ref{thm:abstractchargamma} and \ref{thm:abstractcharppsi} from the viewpoint of convex geometry.

Some lattices $\mathcal{L}$ have very different algebraic structure from the integers. We say that $\mathcal{L}$ is a \emph{torsion lattice} if $(\mathcal{L},+)$ is also a torsion group, i.e., each of its elements has finite order.
This is clearly equivalent to the fact that there exists a sequence $(\mathcal{M}_n)_{n=0}^{\infty}$ of finite subgroups of $\mathcal{L}$ such that
\begin{equation}\label{eq:sequencesubgroups}
\{0\} = \mathcal{M}_0 \subset \mathcal{M}_1 \subset \mathcal{M}_2 \subset\cdots \quad\text{and}\quad
\bigcup_{n=0}^{\infty} \mathcal{M}_n = \mathcal{L}.
\end{equation}
In order to construct such a sequence recursively one has to enumerate the lattice as $\mathcal{L}=\{k_i\}_{i=1}^{\infty}$ and at the $n$-th step take the smallest index $i$ such that $k_i\not\in\mathcal{M}_{n-1}$, observing that the sumset $\mathcal{M}_n:=\mathcal{M}_{n-1}+\langle\{k_i\}\rangle$ is a finite subgroup.
Any such sequence of subgroups is also a F{\o}lner sequence, i.e., it satisfies the F{\o}lner property \eqref{eq:folnerproperty}, which is a trivial consequence of the fact that $k+\mathcal{M}_n=\mathcal{M}_n$ as soon as $k\in\mathcal{M}_n$.

There are two typical examples of discrete abelian torsion groups to keep in mind.

\begin{example}\label{ex:discretedyadicgroup}
Let us use the standard notation $\mathbb{Z}_n$ for the cyclic group of residues modulo $n$, i.e., $\mathbb{Z}_n\cong\mathbb{Z}/n\mathbb{Z}$. Consider the infinite direct sum of cyclic groups
\begin{align}
\mathcal{L} & := \bigoplus_{i\in\mathbb{N}}\mathbb{Z}_{n_i} = \mathbb{Z}_{n_1} \oplus \mathbb{Z}_{n_2} \oplus \mathbb{Z}_{n_3} \oplus \cdots \label{eq:sumofcyclicgroups} \\
& \,= \big\{ (a_j)_{j=1}^{\infty}\in{\textstyle\prod_{j=1}^{\infty}\mathbb{Z}_{n_j}} : \text{only finitely many }a_j\text{ are nonzero} \big\}, \nonumber
\end{align}
where $n_1,n_2,n_3,\ldots$ is an arbitrary sequence of positive integers. By \cite[Prop.~4.8]{F95} and \cite[Thm.~4.31]{F95} its dual group $\widehat{\mathcal{L}}$ is isomorphic to $\prod_{i\in\mathbb{N}}\mathbb{Z}_{n_i}$, which are the so-called \emph{Vilenkin groups}.
A particular case is
\[ \mathbb{Z}_2^\omega = \mathbb{Z}_{2} \oplus \mathbb{Z}_{2} \oplus \mathbb{Z}_{2} \oplus \cdots, \]
which we simply call the \emph{discrete dyadic group} and its dual is known as the \emph{Cantor dyadic group}.

The group $\mathbb{Z}_2^\omega$ is bijectively mapped to the set of nonnegative integers $\mathbb{N}_0$ via
\[ \mathbb{Z}_2^\omega\to\mathbb{N}_0,\quad (b_j)_{j=1}^{\infty}\mapsto\sum_{j=1}^{\infty} b_j 2^{j-1} \]
and its inverse is given by
\[ \mathbb{N}_0\to\mathbb{Z}_2^\omega,\quad b\mapsto\text{the sequence of binary digits of $b$ viewed from right to left}. \]
Transporting the group operation from $\mathbb{Z}_2^\omega$ to $\mathbb{N}_0$ via the above bijection yields a binary operation called the \emph{nim-addition}, which plays an important role in the theory of impartial combinatorial games; see \cite[Ch.~6]{C01}.
It is often convenient to represent the discrete dyadic group on the set of nonnegative integers, whenever we need to list some of its elements, as we do in Table~\ref{tab:examplesofvectors2} below.

It is also natural to regard $\mathcal{L}=\mathbb{Z}_2^\omega$ as a lattice in the so-called Walsh field $G=\mathbb{W}$, consisting of two-sided sequences of binary digits $(b_j)_{j\in\mathbb{Z}}$ such that $b_j=0$ when $j$ is large enough, but which can have infinitely many nonzero digits for negative indices $j$.
In harmonic analysis $\mathbb{W}$ is commonly considered as a ``toy model'' for the (nonnegative) reals and $\mathbb{Z}_2^\omega$ is then regarded as a toy model for the (nonnegative) integers. Many difficult problems in analysis and combinatorics can be first studied in those simplified models; see \cite[Ch.~8]{T06} and \cite[Sec.~5]{G05} respectively.

If $\sup_{j}n_j<\infty$ and $N$ is the least common multiple of the numbers $n_1,n_2,\ldots$, then a notable property of \eqref{eq:sumofcyclicgroups} is $Nk=0$ for each $k\in\mathcal{L}$. We say that $\mathcal{L}$ has a \emph{bounded exponent} and the smallest such positive integer $N$ is called the \emph{exponent} of $\mathcal{L}$.
Conversely, by the first Pr\"{u}fer theorem (see \cite[Ch.~3]{F70}) any countable abelian group of bounded exponent is isomorphic to \eqref{eq:sumofcyclicgroups} for some bounded sequence of positive integers $(n_j)_{j=1}^{\infty}$.
\end{example}

\begin{example}\label{ex:prufergroup}
Take a prime number $p$ and consider the set
\[ \mathcal{L} := \Big\{ \frac{m}{p^n} \,:\, n\in\mathbb{N}_0,\, m\in\mathbb{Z},\, 0\leq m<p^n \Big\} \]
with the binary operation being the addition modulo $1$. This group is called the \emph{Pr\"{u}fer $p$-group} and it is sometimes denoted $\mathbb{Z}(p^\infty)$. The dual group $\widehat{\mathcal{L}}$ is isomorphic to the group of $p$-adic integers, see \cite[Thm.~4.12]{F95}, so it is important in number theory. The only finite subgroups of $\mathcal{L}$ are
\[ \mathcal{M}_n := \Big\{ \frac{m}{p^n} \,:\, m\in\mathbb{Z},\, 0\leq m<p^n \Big\} \]
for each nonnegative integer $n$ and observe that $\mathcal{M}_0\subseteq\mathcal{M}_1\subseteq\mathcal{M}_2\subseteq\cdots$ and $\mathcal{M}_n \cong \mathbb{Z}_{p^n}$.
It can be said that the Pr\"{u}fer group is a direct limit of the sequence $(\mathbb{Z}_{p^n})_{n=1}^{\infty}$. Actually, the only subgroups of $\mathcal{L}$ are $\mathcal{M}_0,\mathcal{M}_1,\mathcal{M}_2,\ldots$ and the whole group itself, which complements the previous example, where the lattice has abundance of finite subgroups.
\end{example}


\subsection{Redundancy of convex combinations}

Let us begin with the following easy observation. Suppose that we want to be sparing and verify condition \eqref{eq:basiccharppsi} from Theorem~\ref{thm:abstractcharppsi} by taking infimum only over a certain collection $\mathcal{G}$ of finite sets $\Gamma\subset\mathcal{L}$.
Whenever we have $\Gamma_0,\Gamma_1,\ldots,\Gamma_m\in\mathcal{G}$ such that $g_{\Gamma_0}$ is a convex combination of functions $g_{\Gamma_1},\ldots,g_{\Gamma_m}$, then we can freely throw out the set $\Gamma_0$ from the collection $\mathcal{G}$. Indeed, if $g_{\Gamma_0} = \sum_{i=1}^{m} \gamma_i g_{\Gamma_i}$ for some nonnegative numbers $\gamma_1,\ldots,\gamma_m$ adding up to $1$, then obviously
\[ \int_{\widehat{\mathcal{L}}} g_{\Gamma_0}(\tau) p_\psi(\tau) d\tau
= \sum_{i=1}^{m} \gamma_i \int_{\widehat{\mathcal{L}}} g_{\Gamma_i}(\tau) p_\psi(\tau) d\tau
\geq \min_{1\leq i\leq m} \int_{\widehat{\mathcal{L}}} g_{\Gamma_i}(\tau) p_\psi(\tau) d\tau. \]

It might be easier to spot dependencies among the functions $g_{\Gamma}$ by looking at ``sequences'' of their Fourier coefficients $v_{\Gamma}\in\ell^{\infty}(\mathcal{L})$. By part (a) of Lemma~\ref{lemma:gdefprop} we know that $v_\Gamma\colon\mathcal{L}\to\mathbb{C}$ is given by
\[ v_\Gamma(k) = \frac{\mathop{\textup{card}}(\Gamma\cap(k+\Gamma))}{\mathop{\textup{card}}\Gamma} \]
and we can view $v_\Gamma$ as an infinite vector of numbers from $[0,1]$ such that all but finitely many if its entries are equal to $0$.
For each finite set $\Gamma\subset\mathcal{L}$ we write $\mathbbm{1}_\Gamma$ for the characteristic function of the set $\Gamma$, which can also be thought of as an infinite vector with only finitely many nonzero elements.
Several examples for the groups $\mathcal{L}=\mathbb{Z}$ and $\mathcal{L}=\mathbb{Z}_2^\omega$ are given in Tables~\ref{tab:examplesofvectors1} and \ref{tab:examplesofvectors2} respectively.
For example, on the latter group we have
\begin{align*}
v_{\{0,1,2\}} & = \textstyle\frac{1}{6}v_{\{0,1\}} + \frac{1}{6}v_{\{0,2\}} + \frac{1}{6}v_{\{0,3\}} + \frac{1}{2}v_{\{0,1,2,3\}} \\
& = \textstyle\frac{1}{3}v_{\{0\}} + \frac{2}{3}v_{\{0,1,2,3\}} .
\end{align*}

\begin{table}
\begin{center}\begin{tabular}{c|c|c|}
$\Gamma$ & $\mathbbm{1}_\Gamma$ & $v_\Gamma$ \\ \hline
$\{0\}$ & $(\ldots,0,0,0,1,0,0,0,\ldots)$ & $(\ldots,0,0,0,1,0,0,0,\ldots)$ \\ \hline
$\{0,1\}$ & $(\ldots,0,0,0,1,1,0,0,\ldots)$ & $(\ldots,0,0,\frac{1}{2},1,\frac{1}{2},0,0,\ldots)$ \\ \hline
$\{0,1,2\}$ & $(\ldots,0,0,0,1,1,1,0,\ldots)$ & $(\ldots,0,\frac{1}{3},\frac{2}{3},1,\frac{2}{3},\frac{1}{3},0,\ldots)$ \\ \hline
$\{0,2\}$ & $(\ldots,0,0,0,1,0,1,0,\ldots)$ & $(\ldots,0,\frac{1}{2},0,1,0,\frac{1}{2},0,\ldots)$ \\ \hline
$\{0,1,2,3\}$ & $(\ldots,0,0,0,1,1,1,1,\ldots)$ & $(\ldots,\frac{1}{4},\frac{1}{2},\frac{3}{4},1,\frac{3}{4},\frac{1}{2},\frac{1}{4},\ldots)$ \\ \hline
$\{0,3\}$ & $(\ldots,0,0,0,1,0,0,1,\ldots)$ & $(\ldots,\frac{1}{2},0,0,1,0,0,\frac{1}{2},\ldots)$ \\ \hline
$\{0,1,3\}$ & $(\ldots,0,0,0,1,1,0,1,\ldots)$ & $(\ldots,\frac{1}{3},\frac{1}{3},\frac{1}{3},1,\frac{1}{3},\frac{1}{3},\frac{1}{3},\ldots)$ \\ \hline
\end{tabular}\end{center}
\caption{A table of vectors $\mathbbm{1}_\Gamma$ and $v_\Gamma$ for several sets $\Gamma\subset\mathbb{Z}$.
Dots on either side replace sequences of zeros.}
\label{tab:examplesofvectors1}
\end{table}

\begin{table}
\begin{center}\begin{tabular}{c|c|c|}
$\Gamma$ & $\mathbbm{1}_\Gamma$ & $v_\Gamma$ \\ \hline
$\{0\}^\ast$ & $(1,0,0,0,\ldots)$ & $(1,0,0,0,\ldots)$ \\ \hline
$\{0,1\}^\ast$ & $(1,1,0,0,\ldots)$ & $(1,1,0,0,\ldots)$ \\ \hline
$\{0,2\}^\ast$ & $(1,0,1,0,\ldots)$ & $(1,0,1,0,\ldots)$ \\ \hline
$\{0,3\}^\ast$ & $(1,0,0,1,\ldots)$ & $(1,0,0,1,\ldots)$ \\ \hline
$\{0,1,2\}$ & $(1,1,1,0,\ldots)$ & \,\,$(1,\frac{2}{3},\frac{2}{3},\frac{2}{3},\ldots)$ \\ \hline
$\{0,1,3\}$ & $(1,1,0,1,\ldots)$ & \,\,$(1,\frac{2}{3},\frac{2}{3},\frac{2}{3},\ldots)$ \\ \hline
$\{0,1,2,3\}^\ast$ & $(1,1,1,1,\ldots)$ & $(1,1,1,1,\ldots)$ \\ \hline
\end{tabular}\end{center}
\caption{A table of vectors $\mathbbm{1}_\Gamma$ and $v_\Gamma$ for several sets $\Gamma\subset\mathbb{Z}_2^\omega\cong\mathbb{N}_0$.
Subgroups are marked with an asterisk.}
\label{tab:examplesofvectors2}
\end{table}

The Fourier transform $\ell^2(\mathcal{L})\to\textup{L}^2(\widehat{\mathcal{L}})$ is a linear bijection, so any convex dependence among the vectors $v_\Gamma$ translates into convex dependence among the functions $g_\Gamma$ and vice versa.
Certain lattices might not even have many such dependencies. This is why we restrict our attention to torsion groups throughout this section.

Here is an easy consequence of Theorem~\ref{thm:abstractcharppsi} and the previous observations.
Recall that an \emph{extreme point} of a convex set is any point that cannot be expressed as a nontrivial convex combination of two different points in that set.

\begin{corollary}\label{cor:charpsitorsion}
Suppose that $\mathcal{L}$ is a torsion lattice. Let $\mathcal{K}\subseteq\textup{L}^1(\widehat{\mathcal{L}})$ be the convex hull of the functions $g_\Gamma$ as $\Gamma$ ranges over all nonempty finite subsets of $\mathcal{L}$ and let $\mathcal{E}$ be the collection of functions $g_\Gamma$ that are also extreme points of $\mathcal{K}$.
The system $\mathcal{F}_\psi$ is democratic if and only if $p_\psi\in\textup{L}^\infty(\widehat{\mathcal{L}})$ and
\begin{equation}\label{eq:basiccharppsi2}
\inf_{g\in\mathcal{E}} \int_{\widehat{\mathcal{L}}} g(\tau) p_\psi(\tau) d\tau > 0.
\end{equation}
\end{corollary}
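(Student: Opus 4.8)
The plan is to read the corollary off Theorem~\ref{thm:abstractcharppsi}. That theorem already asserts democracy to be equivalent to $p_\psi\in\textup{L}^\infty(\widehat{\mathcal{L}})$ together with \eqref{eq:basiccharppsi}, so it suffices to prove that, for any nonnegative $p_\psi\in\textup{L}^1(\widehat{\mathcal{L}})$, the two infima coincide,
\[ \inf_{g\in\mathcal{E}} \int_{\widehat{\mathcal{L}}} g(\tau)p_\psi(\tau)\,d\tau \ = \ \inf_{\Gamma} \int_{\widehat{\mathcal{L}}} g_\Gamma(\tau)p_\psi(\tau)\,d\tau, \]
with $\Gamma$ ranging over nonempty finite subsets of $\mathcal{L}$. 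One inequality is immediate: since $\mathcal{E}$ is by definition a subfamily of $\{g_\Gamma\}$, the infimum over the smaller family $\mathcal{E}$ is at least the infimum over all $\Gamma$. For the reverse inequality it is enough, by the redundancy observation recorded just before this corollary, to show that \emph{every} $g_\Gamma$ is a finite convex combination of members of $\mathcal{E}$; then $\int g_\Gamma p_\psi$ is the corresponding convex combination of the values $\int g\,p_\psi$ with $g\in\mathcal{E}$, hence bounded below by $\inf_{g\in\mathcal{E}}\int g\,p_\psi$.

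The reduction to finitely many dimensions is where the torsion hypothesis is used. Fix a nonempty finite $\Gamma$, and by \eqref{eq:sequencesubgroups} choose a finite subgroup $\mathcal{M}_N$ with $\Gamma\subseteq\mathcal{M}_N$. Set $\mathcal{S}_N:=\{g_\Lambda:\emptyset\neq\Lambda\subseteq\mathcal{M}_N\}$, which is a finite collection, and let $\mathcal{K}_N:=\operatorname{conv}\mathcal{S}_N$. By part~(a) of Lemma~\ref{lemma:gdefprop} each $g_\Lambda$ with $\Lambda\subseteq\mathcal{M}_N$ is a generalized trigonometric polynomial whose frequencies lie in $\Lambda-\Lambda\subseteq\mathcal{M}_N$, so $\mathcal{K}_N$ is contained in the finite-dimensional span of $\{e(k,\cdot):k\in\mathcal{M}_N\}$ and is therefore a polytope. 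Since $g_\Gamma\in\mathcal{K}_N$, Minkowski's theorem (the finite-dimensional Krein--Milman theorem) writes $g_\Gamma$ as a convex combination of extreme points of $\mathcal{K}_N$, and these extreme points automatically lie in the generating set $\mathcal{S}_N$.

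The main point, and the step that I expect to need the most care, is the claim that \emph{every extreme point of $\mathcal{K}_N$ is already an extreme point of the full set $\mathcal{K}$}, so that it belongs to $\mathcal{E}$. To prove this I would pass to the Fourier side, i.e.\ to the nonnegative vectors $v_\Gamma(k)=\mathop{\textup{card}}(\Gamma\cap(k+\Gamma))/\mathop{\textup{card}}\Gamma$ from part~(a) of Lemma~\ref{lemma:gdefprop}, which determine the $g_\Gamma$ bijectively. Suppose $g_\Lambda$ with $\Lambda\subseteq\mathcal{M}_N$ is expressed as a convex combination $\sum_i\gamma_i g_{\Gamma_i}$ of functions from $\mathcal{K}$ with all $\gamma_i>0$. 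Reading this as $v_\Lambda=\sum_i\gamma_i v_{\Gamma_i}$ and using that $v_\Lambda$ is supported in $\mathcal{M}_N$ while every $v_{\Gamma_i}\geq0$, we conclude that each $v_{\Gamma_i}$ is also supported in $\mathcal{M}_N$. Because $v_{\Gamma_i}(k)\neq0$ exactly when $k\in\Gamma_i-\Gamma_i$, this forces $\Gamma_i-\Gamma_i\subseteq\mathcal{M}_N$, i.e.\ $\Gamma_i$ lies in a single coset of $\mathcal{M}_N$; by the translation invariance of $g_\Gamma$ noted after \eqref{eq:gfunctdef} we may assume $\Gamma_i\subseteq\mathcal{M}_N$, so $g_{\Gamma_i}\in\mathcal{S}_N$. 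Hence any convex representation of a point of $\mathcal{K}_N$ inside $\mathcal{K}$ in fact stays inside $\mathcal{K}_N$, so extremality in $\mathcal{K}_N$ upgrades to extremality in $\mathcal{K}$ and thus to membership in $\mathcal{E}$. Combining this with the previous paragraph exhibits $g_\Gamma$ as a convex combination of members of $\mathcal{E}$, the two infima agree, and the corollary follows from Theorem~\ref{thm:abstractcharppsi}.
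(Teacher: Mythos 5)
Your proposal is correct and follows essentially the same route as the paper's own proof: reduce to Theorem~\ref{thm:abstractcharppsi} via the redundancy of convex combinations, localize to the finitely many $g_\Lambda$ with $\Lambda$ contained in a fixed finite subgroup $\mathcal{M}_N\supseteq\Gamma$, apply Minkowski's theorem there, and upgrade local extreme points to extreme points of $\mathcal{K}$ by the nonnegativity of the Fourier coefficients $v_{\Gamma_i}$ together with the support constraint $v_{\Gamma_i}(k)\neq0\Leftrightarrow k\in\Gamma_i-\Gamma_i$. The paper packages the localization as the identity $\mathcal{K}'=\mathcal{K}\cap\mathcal{Q}$ for the space $\mathcal{Q}$ of real trigonometric polynomials with frequencies in $\mathcal{M}$, but this is only a cosmetic difference from your $\mathcal{K}_N=\operatorname{conv}\mathcal{S}_N$ formulation.
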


\begin{proof}[Proof of Corollary~\ref{cor:charpsitorsion}]
Modulo Theorem~\ref{thm:abstractcharppsi} the necessity of obvious, while for the sufficiency we only need to show that condition \eqref{eq:basiccharppsi2} implies condition \eqref{eq:basiccharppsi}. Taking into account the remarks from the beginning of this subsection, it remains to prove that each function $g_\Gamma$ is a convex combination of the functions from $\mathcal{E}$.

Take an arbitrary nonempty finite $\Gamma_0\subset\mathcal{L}$.
Choose a finite subgroup $\mathcal{M}$ of $\mathcal{L}$ that contains $\Gamma_0-\Gamma_0$; it could simply be a member of the sequence \eqref{eq:sequencesubgroups}.
Let $\mathcal{Q}$ be the set of all generalized trigonometric polynomials $q$ of the form
\[ q(\tau) = \sum_{k\in\mathcal{M}} \alpha_k e(k,\tau) \]
for some coefficients $\alpha_k\in\mathbb{R}$. Furthermore, let $\mathcal{K}'\subseteq\textup{L}^1(\widehat{\mathcal{L}})$ be the convex hull of the functions $g_\Gamma$ that also belong to the set $\mathcal{Q}$.

At first we claim that $\mathcal{K}'=\mathcal{K}\cap\mathcal{Q}$. The inclusion ``$\subseteq$'' is obvious as $\mathcal{Q}$ is a convex set, so we only need to show the reverse inclusion ``$\supseteq$''. Any $q\in\mathcal{K}\cap\mathcal{Q}$ is (by the definition of $\mathcal{K}$) a convex combination of some functions $g_{\Gamma_1},\ldots,g_{\Gamma_m}$, i.e., there exist $\gamma_1,\ldots,\gamma_m>0$ such that $\sum_{i=1}^{m}\gamma_i=1$ and $q = \sum_{i=1}^{m} \gamma_i g_{\Gamma_i}$. If we had $g_{\Gamma_j}\not\in\mathcal{Q}$ for some index $j$, then there would exist $k\in\mathcal{L}\setminus\mathcal{M}$ such that $v_{\Gamma_j}(k)>0$. This would imply that the $k$-th Fourier coefficient of $q$ is $\sum_{i=1}^{m} \gamma_i v_{\Gamma_i}(k) > 0$ and contradict $q\in\mathcal{Q}$. Therefore, $g_{\Gamma_i}\in\mathcal{Q}$ for $i=1,\ldots,m$, which precisely means $q\in\mathcal{K}'$.
Since by the construction $g_{\Gamma_0}\in\mathcal{K}\cap\mathcal{Q}$, in particular we conclude $g_{\Gamma_0}\in\mathcal{K}'$.

Observe that $\mathcal{Q}$ is a finite-dimensional real vector space; it is isomorphic to $\mathbb{R}^{\mathop{\textup{card}}\mathcal{M}}$.
Moreover, there are only finitely many different function $g_\Gamma$ belonging to $\mathcal{Q}$.
Indeed, take one corresponding $\Gamma$. By translating it we may assume that $0\in\Gamma$; the resulting function $g_{\Gamma}$ will not change.
Then we must have $\Gamma\subseteq\mathcal{M}$, since existence of $k\in\Gamma\setminus\mathcal{M}$ would imply
\[ v_\Gamma(k) = \frac{\mathop{\textup{card}}(\Gamma\cap(k+\Gamma))}{\mathop{\textup{card}}\Gamma} \geq \frac{\mathop{\textup{card}}(\{k\})}{\mathop{\textup{card}}\Gamma} > 0, \]
so $g_\Gamma$ could not belong to $\mathcal{Q}$.
Using Minkowski's theorem (i.e., a finite-dimensional variant of the Krein-Milman theorem) we conclude that each point of $\mathcal{K}'$ is a convex combination of some functions $g_\Gamma$ that are also extreme points of $\mathcal{K}'$. In particular this holds for $g_{\Gamma_0}$.

Finally, it remains to show that each function $g_{\Gamma}$ which is an extreme point of $\mathcal{K}'$ is also an extreme point of the larger convex set $\mathcal{K}$. If the latter was not true, then we could write $g_\Gamma$ as a convex combination $\sum_{i=1}^{m} \gamma_i g_{\Gamma_i}$ with $m\geq 2$ and $\gamma_i>0$ for each index $i$. As before, from $g_{\Gamma}\in\mathcal{Q}$ we would conclude $g_{\Gamma_i}\in\mathcal{Q}$ for $i=1,\ldots,m$, so that indeed $g_{\Gamma_i}\in\mathcal{K}'$ for each $i$, which would contradict the fact that $g_{\Gamma}$ is an extreme point of  $\mathcal{K}'$.
\end{proof}

If $\Gamma_0\subseteq\mathcal{M}_N$ for some large enough integer $N$, where $(\mathcal{M}_n)_{n=0}^{\infty}$ is a sequence from \eqref{eq:sequencesubgroups}, then from the previous proof we also know that $g_{\Gamma_0}$ is an extreme point of $\mathcal{K}$ if and only if $v_{\Gamma_0}$ is an extreme point of the convex hull of the points $v_{\Gamma}$ as $\Gamma$ ranges over nonempty subsets of $\mathcal{M}_N$.

When $\mathcal{M}$ is a subgroup of $\mathcal{L}$, then we reserve the notation $\mathcal{M}^\perp$ for the orthogonal complement of $\mathcal{M}$ relative to $\mathcal{L}$, i.e.,
\[ \mathcal{M}^\perp := \{\tau\in\widehat{\mathcal{L}} \,:\, e(k,\tau)=1 \text{ for each } k\in \mathcal{M}\}. \]
Since the topology of $\mathcal{L}$ is discrete, $\mathcal{M}$ is automatically closed in $\mathcal{L}$, so $\mathcal{M}\mapsto\mathcal{M}^\perp$ is now a bijective correspondence between subgroups of $\mathcal{L}$ and closed subgroups of $\widehat{\mathcal{L}}$. Moreover, suppose that $\mathcal{M}$ is finite. From the isomorphism \eqref{eq:groupiso2} we get
\begin{equation}\label{eq:groupiso2m}
\widehat{\mathcal{L}}/\mathcal{M}^\perp\cong\widehat{\mathcal{M}}
\end{equation}
and, since the dual group of a finite abelian group is isomorphic to itself (see \cite[Cor.~4.7]{F95}), we conclude
\begin{equation}\label{eq:groupiso2mcard}
\mathop{\textup{card}}(\widehat{\mathcal{L}}/\mathcal{M}^\perp) = \mathop{\textup{card}}\mathcal{M}.
\end{equation}
Consequently, $\mathcal{M}^\perp$ is a closed subgroup of $\widehat{\mathcal{L}}$ of finite index and each such subgroup must also be open, since its complement is a union of finitely many closed cosets. Conversely, every closed subgroup of $\widehat{\mathcal{L}}$ is certainly of the form $\mathcal{M}^\perp$ for some subgroup $\mathcal{M}$ of $\mathcal{L}$. If it is also of finite index, then from the same isomorphism \eqref{eq:groupiso2m} we conclude that $\mathcal{M}$ must be finite. Let us summarize by saying that $\mathcal{M}\mapsto\mathcal{M}^\perp$ is also a bijective correspondence between finite subgroups of $\mathcal{L}$ and closed subgroups of $\widehat{\mathcal{L}}$ of finite index (which are automatically also open).

Take a torsion lattice $\mathcal{L}$. Orthogonal complements of the terms in \eqref{eq:sequencesubgroups} satisfy
\[ \mathcal{L} = \mathcal{M}_0^\perp \supset \mathcal{M}_1^\perp \supset \mathcal{M}_2^\perp \supset\cdots \quad\text{and}\quad
\bigcap_{n=0}^{\infty} \mathcal{M}_n = \{0\}. \]
Suppose that we have the periodization function $p_\psi$ associated with some square-integrable function $\psi$ for which we want to verify the democratic property of $\mathcal{F}_\psi$. In any real-world application it is likely that $p_\psi$ will be given to us with a precision to certain scale and we are required to verify conditions $p_\psi\leq C$ and $\int_{\widehat{\mathcal{L}}} g_\Gamma(\tau) p_\psi(\tau) d\tau\geq c$, where $C$ (resp.\@ $c$) is some reasonably large (resp.\@ small) constant. If we only have information about the averages of $p_\psi$ on the cosets of $\mathcal{M}_N^\perp$ (for some large but fixed positive integer $N$), then it only makes sense to test the lower bound for $\Gamma\subseteq\mathcal{M}_N$, as $g_\Gamma$ is constant on the cosets of $\mathcal{M}_N^\perp$ only for such sets $\Gamma$. This allows us to preprocess the finite group $\mathcal{M}_N$ in the search for extreme points. Numerical experimentation in Example~\ref{ex:discretedyadicgroup2} suggests that this can save the computational time significantly.


\subsection{Subgroups as extreme points and some examples}

We continue by investigating a special role of finite subgroups of $\mathcal{L}$. It is already hinted by the following auxiliary lemma.

\begin{lemma}\ \label{lemma:gbasic}
\begin{itemize}
\item[(a)]
If $\Gamma=m+\mathcal{M}$, where $m\in\mathcal{L}$ and $\mathcal{M}$ is a finite subgroup of $\mathcal{L}$, then
\[ g_\Gamma(\tau) = \sum_{k\in\mathcal{M}} e(k,\tau); \quad \tau\in\widehat{\mathcal{L}} \]
and
\[ g_\Gamma(\tau) = \begin{cases} 1/|\mathcal{M}^\perp| & \text{for } \tau\in\mathcal{M}^\perp, \\ 0 & \text{for } \tau\not\in\mathcal{M}^\perp. \end{cases} \]
\item[(b)]
Conversely, if $\Gamma\subseteq\mathcal{L}$ is a nonempty finite set such that all coefficients of $g_\Gamma$ are either $0$ or $1$, i.e.,
\[ g_\Gamma(\tau) = \sum_{k\in S} e(k,\tau) \]
for some finite $S\subseteq\mathcal{L}$, then $\Gamma$ has to be of the form $\Gamma=m+\mathcal{M}$ where $m\in\mathcal{L}$ and $\mathcal{M}$ is a finite subgroup of $\mathcal{L}$, and indeed $S=\mathcal{M}$.
\end{itemize}
\end{lemma}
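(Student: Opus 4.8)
The plan is to treat the two parts separately, in each case reducing everything to the coefficient formula from part~(a) of Lemma~\ref{lemma:gdefprop}, namely $g_\Gamma(\tau)=\sum_{k\in\mathcal{L}} v_\Gamma(k)\,e(k,\tau)$ with $v_\Gamma(k)=\mathop{\textup{card}}(\Gamma\cap(k+\Gamma))/\mathop{\textup{card}}\Gamma$.

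For part~(a) I would first invoke the translation-invariance of $g_\Gamma$ observed right after \eqref{eq:gfunctdef} to reduce to the case $m=0$, i.e.\ $\Gamma=\mathcal{M}$. Then I would simply compute $v_\mathcal{M}(k)$: for $k\in\mathcal{M}$ one has $k+\mathcal{M}=\mathcal{M}$, so $v_\mathcal{M}(k)=1$, whereas for $k\notin\mathcal{M}$ the coset $k+\mathcal{M}$ is disjoint from $\mathcal{M}$, so $v_\mathcal{M}(k)=0$. This yields the first displayed formula $g_\mathcal{M}(\tau)=\sum_{k\in\mathcal{M}}e(k,\tau)$. For the closed-form expression I would evaluate this character sum: since $k\mapsto e(k,\tau)$ restricted to $\mathcal{M}$ is precisely the character $\tau|_\mathcal{M}\in\widehat{\mathcal{M}}$ via the construction behind \eqref{eq:smallandbige}, orthogonality of characters on the finite group $\mathcal{M}$ gives $\mathop{\textup{card}}\mathcal{M}$ when $\tau\in\mathcal{M}^\perp$ and $0$ otherwise. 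Finally I would identify the constant using the normalization $|\widehat{\mathcal{L}}|=1$: partitioning $\widehat{\mathcal{L}}$ into the $\mathop{\textup{card}}(\widehat{\mathcal{L}}/\mathcal{M}^\perp)$ cosets of the open subgroup $\mathcal{M}^\perp$, each of equal Haar measure $|\mathcal{M}^\perp|$, together with \eqref{eq:groupiso2mcard} gives $1=\mathop{\textup{card}}(\mathcal{M})\,|\mathcal{M}^\perp|$, hence $\mathop{\textup{card}}\mathcal{M}=1/|\mathcal{M}^\perp|$.

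For part~(b) I would introduce the period group $\mathcal{M}:=\{k\in\mathcal{L}:k+\Gamma=\Gamma\}$, which is visibly a subgroup and is finite since it is contained in $\Gamma-\Gamma$. The heart of the argument is to show that the hypothesis forces the support $S$ of $g_\Gamma$ to coincide with $\mathcal{M}$. Indeed, $v_\Gamma(k)>0$ holds exactly when $\Gamma\cap(k+\Gamma)\neq\emptyset$, i.e.\ $k\in\Gamma-\Gamma$, while $v_\Gamma(k)=1$ forces $\Gamma\cap(k+\Gamma)=\Gamma$ and hence, by equality of finite cardinalities, $k+\Gamma=\Gamma$, that is $k\in\mathcal{M}$; conversely every $k\in\mathcal{M}$ has $v_\Gamma(k)=1$. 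Since by assumption every coefficient is $0$ or $1$, the conditions $v_\Gamma(k)>0$ and $v_\Gamma(k)=1$ become equivalent, giving $S=\Gamma-\Gamma=\mathcal{M}$.

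Having $\Gamma-\Gamma=\mathcal{M}$, I would conclude by fixing any $m\in\Gamma$: then $\Gamma-m\subseteq\Gamma-\Gamma=\mathcal{M}$ yields $\Gamma\subseteq m+\mathcal{M}$, while the $\mathcal{M}$-invariance of $\Gamma$ (each $k\in\mathcal{M}$ satisfies $k+\Gamma=\Gamma$, so $m+k\in\Gamma$) yields $m+\mathcal{M}\subseteq\Gamma$; hence $\Gamma=m+\mathcal{M}$ with $S=\mathcal{M}$, as claimed. I expect the only genuinely subtle point to be the equivalence of $v_\Gamma(k)>0$ and $v_\Gamma(k)=1$ under the $0/1$ hypothesis, which is exactly what pins the support down to a subgroup; once $\Gamma-\Gamma$ is known to be a group, the single-coset structure of $\Gamma$ is immediate.
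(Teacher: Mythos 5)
Your proposal is correct and takes essentially the same route as the paper's own proof: part (a) rests on the coefficient formula from Lemma~\ref{lemma:gdefprop}(a) together with the vanishing of the character sum off $\mathcal{M}^\perp$ (your appeal to orthogonality of characters on the finite group $\mathcal{M}$ is exactly the shifting argument the paper writes out inline) and the counting identity $\mathop{\textup{card}}\mathcal{M}=1/|\mathcal{M}^\perp|$ from \eqref{eq:groupiso2mcard}; part (b) rests on the same key observation that $0/1$ coefficients force $\Gamma$ and $k+\Gamma$ to be either equal or disjoint for every $k$. The only cosmetic difference is organizational: you realize the subgroup as the stabilizer $\{k\in\mathcal{L}:k+\Gamma=\Gamma\}$ and show it equals $\Gamma-\Gamma$, whereas the paper sets $\mathcal{M}:=\Gamma-m$ and verifies $\mathcal{M}-\mathcal{M}\subseteq\mathcal{M}$ directly; these are interchangeable ways of finishing the same argument.
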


\begin{proof}[Proof of Lemma~\ref{lemma:gbasic}]
(a) The first claim is a consequence of part (a) of Lemma~\ref{lemma:gdefprop}, since the cosets $m+\mathcal{M}$ and $k+m+\mathcal{M}$ are either equal (when $k\in\mathcal{M}$) or disjoint (when $k\not\in\mathcal{M}$). For the second claim observe that from \eqref{eq:groupiso2mcard} we obtain
\begin{equation}\label{eq:basiccardmeas}
\mathop{\textup{card}}\mathcal{M} = |\widehat{\mathcal{L}}|/|\mathcal{M}^\perp| = 1/|\mathcal{M}^\perp|,
\end{equation}
so for $\tau\in\mathcal{M}^\perp$ we have
\[ g_\mathcal{M}(\tau) = \sum_{k\in\mathcal{M}} e(k,\tau) = \sum_{k\in\mathcal{M}} 1 = \mathop{\textup{card}}\mathcal{M} = \frac{1}{|\mathcal{M}^\perp|}. \]
On the other hand, if $\tau\not\in\mathcal{M}^\perp$, then there exists $k_0\in\mathcal{M}$ such that $e(k_0,\tau)\neq 1$. Therefore,
\begin{align*}
g_\mathcal{M}(\tau) & = \sum_{k\in\mathcal{M}} e(k_0+k-k_0,\tau) = e(k_0,\tau) \sum_{k\in\mathcal{M}} e(k-k_0,\tau) \\
& = e(k_0,\tau) \sum_{m\in \mathcal{M}-k_0=\mathcal{M}} e(m,\tau) = \underbrace{e(k_0,\tau)}_{\neq 1}g_\mathcal{M}(\tau),
\end{align*}
which implies $g_\mathcal{M}(\tau)=0$.

(b) Recall that $e(k,\cdot)$; $k\in\mathcal{L}$ form an orthonormal basis for $\textup{L}^2(\widehat{\mathcal{L}})$, so in particular these functions are linearly independent. Combining the assumption with part (a) of Lemma~\ref{lemma:gdefprop} and equaling the coefficients we see that for any $k\in\mathcal{L}$ the sets $\Gamma$ and $k+\Gamma$ are either equal or disjoint. Fix an arbitrary $m\in\Gamma$ and define $\mathcal{M}:=\Gamma-m$. We need to prove that $\mathcal{M}$ is a subgroup of $\mathcal{L}$. Take any $a,b\in\mathcal{M}$ and observe that
\[ m \in \Gamma\cap(m-b+\mathcal{M}) = \Gamma\cap(-b+\Gamma). \]
Since this intersection is nonempty, we must have $\Gamma=-b+\Gamma$ i.e., $\mathcal{M}=-b+\mathcal{M}$, which in particular implies $a-b\in -b+\mathcal{M}=\mathcal{M}$. We have just shown $\mathcal{M}-\mathcal{M}\subseteq\mathcal{M}$, which verifies that $\mathcal{M}$ is a subgroup. Applying part (a) and using linear independence once again we also conclude $S=\mathcal{M}$.
\end{proof}

In particular we see that the vector of Fourier coefficients $v_\mathcal{M}$ of a finite subgroup $\mathcal{M}$ of $\mathcal{L}$ has only $\{0,1\}$-entries. Consequently, the corresponding function $g_\mathcal{M}$ certainly belongs to the set of extreme points $\mathcal{E}$ of the set $\mathcal{K}$ described in Corollary~\ref{cor:charpsitorsion}.
Part (b) of Lemma~\ref{lemma:gbasic} characterizes subgroup cosets as the only nonempty finite sets $\Gamma$ having $\{0,1\}$-Fourier coefficients. However, we need to emphasize that in general these are not the only extreme points; Example~\ref{ex:prufergroup2} will indirectly disprove that fact on the Pr\"{u}fer $2$-group.
On a related note, polytopes whose vertices are tuples with all entries from $\{0,1\}$ are called \emph{$0/1$-polytopes}. They are combinatorially interesting and extensively studied; see \cite{Z00}.

When $\Gamma=\mathcal{M}$ is a finite subgroup of $\mathcal{L}$, then from part (a) of the previous lemma we see that the integral in \eqref{eq:basiccharppsi} becomes a density-type expression for the periodization:
\[ \frac{1}{|\mathcal{M}^\perp|} \int_{\mathcal{M}^\perp} p_\psi(\tau) d\tau. \]

\begin{example}\label{ex:discretedyadicgroup2}
This is a continuation of Example~\ref{ex:discretedyadicgroup}; recall the discrete dyadic group $\mathbb{Z}_2^\omega$ introduced there. Take
\[ \mathcal{M}_n := \mathbb{Z}_2 \oplus \cdots \oplus \mathbb{Z}_2 \oplus \{0\} \oplus \{0\} \oplus \cdots \cong \mathbb{Z}_2^n \]
as finite subgroups that exhaust the whole group.
Each $\mathcal{M}_n$ is actually an $n$-dimensional vector space over $\mathbb{Z}_2$. The number of $k$-dimensional subspaces of $\mathcal{M}_n$, $0\leq k\leq n$, is given by the particular case $q=2$ of the $q$-binomial coefficient,
\[ {n \choose k}_q := \frac{(q^n-1)(q^{n-1}-1)\cdots(q^{n-k+1}-1)}{(q^1-1)(q^2-1)\cdots(q^k-1)}. \]
Consequently, cardinality of the set $\mathcal{E}_n$ of extreme points $g_\Gamma\in\mathcal{E}$ such that $\Gamma\subseteq\mathcal{M}_n$ is at least $\sum_{k=0}^{n}{n \choose k}_2$. The last expression defines sequence A006116 in the encyclopedia OEIS \cite{OEIS} (beginning with $1$, $2$, $5$, $16$, $67$, \ldots) and its asymptotic behavior is well-known. We can find
\[ \liminf_{n\to\infty}\frac{\mathop{\textup{card}}(\mathcal{E}_n)}{2^{n^2/4}} >0, \]
so the number of extreme points in $\mathcal{M}_n$ grows super-exponentially in $n$.
The actual numerical data are given in Table~\ref{tab:growthofpoints}.
A lot of torsion already causes that many functions $g_\Gamma$ coincide, but it is expected that removing convex combinations reduces that number further significantly.
\end{example}

\begin{table}
\begin{center}\begin{tabular}{l|r|r|r|r|r|}
& $\mathbb{Z}_2^0$ & $\mathbb{Z}_2^1$ & $\mathbb{Z}_2^2$ & $\mathbb{Z}_2^3$ & $\mathbb{Z}_2^4$ \\ \hline
Total number of subsets $\Gamma$ & $1$ & $3$ & $15$ & $255$ & $65535$ \\ \hline
Number of different points $g_\Gamma$ & $1$ & $2$ & $6$ & $45$ & $3966$ \\ \hline
Number of extreme points $g_\Gamma$ & $1$ & $2$ & $5$ & $16$ & $\geq67$ \\ \hline
\end{tabular}\end{center}
\caption{Numerical data for the discrete dyadic group.}
\label{tab:growthofpoints}
\end{table}

The following lemma will be needed in the next example.

\begin{lemma}\label{lemma:gonsubgroups}
If $\Gamma\subset\mathcal{L}$ is a nonempty finite set and $\mathcal{M}$ is a finite subgroup of $\mathcal{L}$, then
\[ \int_{\mathcal{M}^\perp}\! g_\Gamma(\tau) d\tau = \frac{\mathop{\textup{card}} \{(k,k')\in\Gamma\times\Gamma \,:\, k-k'\in\mathcal{M}\}}{\mathop{\textup{card}}\mathcal{M} \mathop{\textup{card}}\Gamma}. \]
\end{lemma}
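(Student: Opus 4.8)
The plan is to integrate the definition \eqref{eq:gfunctdef} of $g_\Gamma$ termwise over $\mathcal{M}^\perp$, reducing everything to a single selection integral for characters. Expanding the squared modulus exactly as in the proof of part~(a) of Lemma~\ref{lemma:gdefprop}, and using \eqref{eq:bicharhom1}--\eqref{eq:bicharhom2}, one writes
\[ g_\Gamma(\tau) = \frac{1}{\mathop{\textup{card}}\Gamma}\sum_{k,k'\in\Gamma} e(k-k',\tau), \]
so that, the sum being finite,
\[ \int_{\mathcal{M}^\perp} g_\Gamma(\tau)\,d\tau = \frac{1}{\mathop{\textup{card}}\Gamma}\sum_{k,k'\in\Gamma} \int_{\mathcal{M}^\perp} e(k-k',\tau)\,d\tau. \]
Thus the whole computation hinges on evaluating $\int_{\mathcal{M}^\perp} e(j,\tau)\,d\tau$ for a single $j\in\mathcal{L}$.

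The key step, and the only substantive one, is the selection identity
\[ \int_{\mathcal{M}^\perp} e(j,\tau)\,d\tau = \begin{cases} |\mathcal{M}^\perp| & \text{if } j\in\mathcal{M}, \\ 0 & \text{if } j\notin\mathcal{M}. \end{cases} \]
Here $\mathcal{M}^\perp$ is a compact subgroup of $\widehat{\mathcal{L}}$ and $e(j,\cdot)$ is a character of $\widehat{\mathcal{L}}$ (via Pontrjagin duality), so its restriction to $\mathcal{M}^\perp$ is a character of that compact group. If $j\in\mathcal{M}$, then $e(j,\cdot)\equiv 1$ on $\mathcal{M}^\perp$ by the very definition of $\mathcal{M}^\perp$, and the integral equals the total mass $|\mathcal{M}^\perp|$. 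If $j\notin\mathcal{M}$, then the character $e(j,\cdot)|_{\mathcal{M}^\perp}$ is nontrivial, and the orthogonality relation for characters on a compact group forces the integral to vanish. The main (and admittedly minor) obstacle is precisely this nontriviality, i.e.\@ the biduality $(\mathcal{M}^\perp)^\perp=\mathcal{M}$; it follows from the bijective correspondence between finite subgroups of $\mathcal{L}$ and closed finite-index subgroups of $\widehat{\mathcal{L}}$ established above.

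Inserting the selection identity retains only those pairs $(k,k')\in\Gamma\times\Gamma$ with $k-k'\in\mathcal{M}$, each contributing $|\mathcal{M}^\perp|$, whence
\[ \int_{\mathcal{M}^\perp} g_\Gamma(\tau)\,d\tau = \frac{|\mathcal{M}^\perp|}{\mathop{\textup{card}}\Gamma}\,\mathop{\textup{card}}\{(k,k')\in\Gamma\times\Gamma : k-k'\in\mathcal{M}\}. \]
Substituting $|\mathcal{M}^\perp|=1/\mathop{\textup{card}}\mathcal{M}$ from \eqref{eq:basiccardmeas} yields the claimed formula. As a streamlined alternative that sidesteps even the orthogonality computation, one may instead invoke part~(a) of Lemma~\ref{lemma:gbasic} to write $\mathbbm{1}_{\mathcal{M}^\perp}=|\mathcal{M}^\perp|\,g_\mathcal{M}$, compute $\int_{\mathcal{M}^\perp} g_\Gamma\,d\tau=|\mathcal{M}^\perp|\int_{\widehat{\mathcal{L}}} g_\Gamma\,g_\mathcal{M}\,d\tau$ by Parseval against the orthonormal basis of characters, and read off $\sum_{k\in\mathcal{M}} v_\Gamma(k)$ directly from the Fourier expansions in part~(a) of Lemma~\ref{lemma:gdefprop}; the same pair-counting then finishes the proof.
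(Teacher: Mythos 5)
Your proof is correct and takes essentially the same route as the paper's: both reduce the claim to the selection identity $\int_{\mathcal{M}^\perp} e(j,\tau)\,d\tau = |\mathcal{M}^\perp|\,\mathbbm{1}_{\mathcal{M}}(j)$ (whose nontrivial case rests on $(\mathcal{M}^\perp)^\perp=\mathcal{M}$; the paper proves the vanishing by a translation-invariance argument, which is exactly the orthogonality relation for characters of the compact group $\mathcal{M}^\perp$ that you invoke) and then integrate the expansion of $g_\Gamma$ term by term, finishing with $|\mathcal{M}^\perp|=1/\mathop{\textup{card}}\mathcal{M}$. The only cosmetic difference is that you keep the sum over pairs $(k,k')\in\Gamma\times\Gamma$ rather than the regrouped Fourier expansion, which absorbs the paper's final double-counting step.
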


\begin{proof}[Proof of Lemma~\ref{lemma:gonsubgroups}]
Let us begin by showing
\[ \int_{\mathcal{M}^\perp}\! e(k,\tau) d\tau = \begin{cases} |\mathcal{M}^\perp| & \text{for } k\in\mathcal{M}, \\ 0 & \text{for } k\not\in\mathcal{M}. \end{cases} \]
This claim is obvious for $k\in\mathcal{M}$, so take $k\in\mathcal{L}\setminus\mathcal{M}$. By $(\mathcal{M}^\perp)^\perp=\mathcal{M}$ there exists $\tau_0\in\mathcal{M}^\perp$ such that $e(k,\tau_0)\neq 1$. Now we can write (using translation invariance of the Haar measure on $\widehat{\mathcal{L}}$):
\begin{align*}
\int_{\mathcal{M}^\perp}\! e(k,\tau) d\tau & = \int_{\mathcal{M}^\perp}\! e(k,\tau_0+\tau-\tau_0) d\tau = e(k,\tau_0) \int_{\mathcal{M}^\perp}\! e(k,\tau-\tau_0) d\tau \\
& = e(k,\tau_0) \int_{-\tau_0+\mathcal{M}^\perp}\! e(k,\tau) d\tau = \underbrace{e(k,\tau_0)}_{\neq 1} \int_{\mathcal{M}^\perp}\! e(k,\tau) d\tau,
\end{align*}
which implies that the above integral is $0$, as needed. Applying part (a) of Lemma~\ref{lemma:gdefprop}, integrating term-by-term, and using \eqref{eq:basiccardmeas} we get
\[ \int_{\mathcal{M}^\perp}\! g_\Gamma(\tau) d\tau = \frac{\sum_{k\in\mathcal{M}}\mathop{\textup{card}}(\Gamma\cap(k+\Gamma))}{\mathop{\textup{card}}\mathcal{M} \mathop{\textup{card}}\Gamma}. \]
In order to transform this formula into the desired one it remains to observe that the numerator above equals
\[ \mathop{\textup{card}}\big\{(k_1,k_2)\in\Gamma\times\Gamma \,:\, k_1-k_2\in\mathcal{M}\big\}, \]
which is easily seen by double counting.
\end{proof}

One might get an impression that, in the case of torsion lattices, it is enough to test the democratic property on subgroups. However, this is not the case, as the following example shows.

\begin{example}\label{ex:prufergroup2}
This is a continuation of Example~\ref{ex:prufergroup}; recall the Pr\"{u}fer $2$-group $\mathbb{Z}(2^\infty)$ and its subgroups $\mathcal{M}_n \cong \mathbb{Z}_{2^n}$.
For each positive integer $n$ choose $s_n\in\mathcal{M}_n\setminus\mathcal{M}_{n-1}$. Then $\mathcal{M}_{n-1}$ and $s_n+\mathcal{M}_{n-1}$ are the only two cosets of the smaller subgroup in the larger one. For any positive integer $n$ define
\[ \Gamma_n := \{0,s_1\} + \{0,s_3\} + \cdots + \{0,s_{2n-1}\}. \]
Any $k,k'\in\Gamma_n$ have unique representations as $k=\sum_{j=1}^{n}\alpha_j s_{2j-1}$, $k'=\sum_{j=1}^{n}\alpha'_j s_{2j-1}$, where $\alpha_j,\alpha'_j\in\{0,1\}$ for each index $j=1,\ldots,n$. For a fixed integer $0\leq m\leq 2n-1$ we observe that
\[ k-k'\in\mathcal{M}_m \Longleftrightarrow \alpha_j=\alpha'_j \text{ for all indices $j$ such that } j>(m+1)/2. \]
Consequently,
\[ \mathop{\textup{card}}\big\{(k,k')\in\Gamma_n\times\Gamma_n \,:\, k-k'\in\mathcal{M}_m\big\}
= \begin{cases}
2^{n+m/2} & \text{if } 0\leq m\leq 2n-1 \text{ is even}, \\
2^{n+(m+1)/2} & \text{if } 0\leq m\leq 2n-1 \text{ is odd},
\end{cases} \]
so Lemma~\ref{lemma:gonsubgroups} gives
\[ \int_{\mathcal{M}_m^\perp}\! g_{\Gamma_n}(\tau) d\tau
= \begin{cases}
2^{-m/2} & \text{if } 0\leq m\leq 2n-1 \text{ is even}, \\
2^{-(m-1)/2} & \text{if } 0\leq m\leq 2n-1 \text{ is odd},
\end{cases} \]
From this we conclude
\begin{equation}\label{eq:counterexg}
\int_{\mathcal{M}_{2i-1}^\perp\setminus\mathcal{M}_{2i}^\perp}\! g_{\Gamma_n}(\tau) d\tau = 2^{-i}
\end{equation}
for $i=1,2,\ldots,n-1$.

Let us now choose a square-integrable function $\psi$ such the periodization function $p_\psi$ is equal to the characteristic function of the set $\bigcup_{i=0}^{\infty}(\mathcal{M}_{2i}^\perp\setminus\mathcal{M}_{2i+1}^\perp)$.
(For this purpose one can simply take $G$ to also equal $\mathbb{Z}(2^\infty)$.)
Using \eqref{eq:counterexg} we get
\[ \int_{\widehat{\mathcal{L}}} g_{\Gamma_n}(\tau) p_\psi(\tau) d\tau
= 1 - \sum_{i=1}^{\infty} \int_{\mathcal{M}_{2i-1}^\perp\setminus\mathcal{M}_{2i}^\perp} g_{\Gamma_n}(\tau) d\tau
\leq 1 - \sum_{i=1}^{n-1} 2^{-i} = 2^{-n+1}, \]
so by taking $n\to\infty$ we see that condition \eqref{eq:basiccharppsi} fails and $\mathcal{F}_\psi$ cannot be a democratic system.
On the other hand, for each nonnegative integer $n$ by part (a) of Lemma~\ref{lemma:gbasic} we have
\[ \int_{\widehat{\mathcal{L}}} g_{\mathcal{M}_n}(\tau) p_\psi(\tau) d\tau
= \frac{1}{|\mathcal{M}_n^\perp|} \int_{\mathcal{M}_n^\perp} p_\psi(\tau) d\tau
= \frac{\sum_{i\geq n/2}|\mathcal{M}_{2i}^\perp\setminus\mathcal{M}_{2i+1}^\perp|}{|\mathcal{M}_n^\perp|}
= \begin{cases}
2/3 & \text{if $n$ is even}, \\
1/3 & \text{if $n$ is odd},
\end{cases} \]
so the above quantities, obtained by testing \eqref{eq:basiccharppsi} on subgroups only, are bounded from below by $1/3$.
\end{example}

The previous example shows that, in general, testing the democratic property on subgroups is not sufficient.


\subsection{Closing remarks}
Recall that the paper \cite{HNSS13} conjectures the sufficiency of testing the democratic property on the finite arithmetic progressions in $\mathbb{Z}$.
When we pass to the torsion lattice $\mathcal{L}$, sufficiently long progressions automatically become finite subgroups. In Example~\ref{ex:prufergroup2} we saw that finite subgroups are not enough, but it might still be sufficient to test condition \eqref{eq:basiccharppsi} for the democratic property of $\mathcal{F}_\psi$ by taking only finite sets $\Gamma$ that are approximate subgroups in an appropriate sense. The notion of an \emph{approximate subgroup} was defined in several possible ways in the book \cite{TV06}.

When we are given a concrete sequence of exhausting subgroups \eqref{eq:sequencesubgroups}, numerical data suggest that the number of extreme points $g_\Gamma\in\mathcal{E}$ coming from $\Gamma\subseteq\mathcal{M}_n$ grows at most like $e^{P(\log\mathop{\textup{card}}(\mathcal{M}_n))}$ for some polynomial $P$. Indeed, Example~\ref{ex:discretedyadicgroup2} gives a lower bound of the form $c' e^{c(\log\mathop{\textup{card}}(\mathcal{M}_n))^2}$ for the discrete dyadic group $\mathbb{Z}_2^\omega$, but we were not able to establish the upper bound, either for $\mathbb{Z}_2^\omega$, or for any other torsion lattice $\mathcal{L}$.

We conclude that characterizations of democratic systems of translates still remain without definite answers and we hope that they might attract researchers from various fields.


\begin{bibdiv}
\begin{biblist}

\bib{BR15}{article}{
author={M. Bownik},
author={K. A. Ross},
title={The structure of translation-invariant spaces on locally compact abelian groups},
journal={J. Fourier Anal. Appl.},
volume={21},
year={2015},
number={4},
pages={849--884},
note={},
eprint={}
}

\bib{CP10}{article}{
author={C. Cabrelli},
author={V. Paternostro},
title={Shift-invariant spaces on LCA groups},
journal={J. Funct. Anal.},
volume={258},
year={2010},
number={6},
pages={2034--2059},
note={},
eprint={}
}

\bib{C13}{book}{
author={D. L. Cohn},
title={Measure theory},
series={Birkh\"{a}user Advanced Texts: Basel Textbooks},
publisher={Birkh\"{a}user/Springer, New York},
volume={},
year={2013}
}

\bib{C01}{book}{
author={J. H. Conway},
title={On numbers and games},
series={},
publisher={A K Peters, Ltd., Natick, MA},
volume={},
year={2001}
}

\bib{F95}{book}{
author={G. B. Folland},
title={A course in abstract harmonic analysis},
series={Studies in Advanced Mathematics},
publisher={CRC Press, Boca Raton, FL},
volume={},
year={1995}
}

\bib{F55}{article}{
author={E. F{\o}lner},
title={On groups with full Banach mean value},
journal={Math. Scand.},
volume={3},
year={1955},
number={},
pages={243--254},
note={},
eprint={}
}

\bib{F70}{book}{
author={L. Fuchs},
title={Infinite abelian groups. Vol. I.},
series={Pure and Applied Mathematics},
publisher={Academic Press, New York-London},
volume={36},
year={1970}
}

\bib{G05}{article}{
author={B. Green},
title={Finite field models in additive combinatorics},
journal={},
volume={},
date={},
number={},
pages={1--27},
note={},
eprint={},
book={
title={Surveys in combinatorics 2005},
series={London Math. Soc. Lecture Note Ser.},
publisher={Cambridge Univ. Press, Cambridge},
volume={327},
year={2005}
}}

\bib{H11}{book}{
author={C. Heil},
title={A basis theory primer. Expanded edition},
series={Applied and Numerical Harmonic Analysis},
publisher={Birkh\"{a}user/Springer, New York},
volume={},
year={2011}
}

\bib{HP06}{article}{
author={C. Heil},
author={A. M. Powell},
title={Gabor Schauder bases and the Balian-Low theorem},
journal={J. Math. Phys.},
volume={47},
year={2006},
number={11},
pages={113506, 21pp},
note={},
eprint={}
}

\bib{HNSS13}{article}{
author={E. Hern\'{a}ndez},
author={M. Nielsen},
author={H. \v{S}iki\'{c}},
author={F. Soria},
title={Democratic systems of translates},
journal={J. Approx. Theory},
volume={171},
year={2013},
number={},
pages={105--127},
note={},
eprint={}
}

\bib{HSWW10a}{article}{
author={E. Hern\'{a}ndez},
author={H. \v{S}iki\'{c}},
author={G. Weiss},
author={E. Wilson},
title={On the properties of the integer translates of a square integrable function},
journal={Contemp. Math.},
volume={505},
year={2010},
number={},
pages={233--249},
note={},
eprint={}
}

\bib{HSWW10}{article}{
author={E. Hern\'{a}ndez},
author={H. \v{S}iki\'{c}},
author={G. Weiss},
author={E. Wilson},
title={Cyclic subspaces for unitary representations of LCA groups; generalized Zak transform},
journal={Colloq. Math.},
volume={118},
year={2010},
number={1},
pages={313--332},
note={},
eprint={}
}

\bib{HW96}{book}{
author={E. Hern\'{a}ndez},
author={G. Weiss},
title={A first course on wavelets},
series={Studies in Advanced Mathematics},
publisher={CRC Press, Boca Raton, FL},
volume={},
year={1996}
}

\bib{HR79}{book}{
author={E. Hewitt},
author={K. A. Ross},
title={Abstract harmonic analysis. Vol. I: Structure of topological groups. Integration theory. Group representations},
series={Die Grundlehren der mathematischen Wissenschaften},
volume={115},
publisher={Springer-Verlag, Berlin-Heidelberg-New York},
year={1979}
}

\bib{HR70}{book}{
author={E. Hewitt},
author={K. A. Ross},
title={Abstract harmonic analysis. Vol. II: Structure and analysis for compact groups. Analysis on locally compact Abelian groups},
series={Die Grundlehren der mathematischen Wissenschaften},
volume={152},
publisher={Springer-Verlag, Berlin-Heidelberg-New York},
year={1970}
}

\bib{HMW73}{article}{
author={R. Hunt},
author={B. Muckenhoupt},
author={R. Wheeden},
title={Weighted norm inequalities for the conjugate function and Hilbert transform},
journal={Trans. Amer. Math. Soc.},
volume={176},
year={1973},
number={},
pages={227--251},
note={},
eprint={}
}

\bib{KT99}{article}{
author={S. V. Konyagin},
author={V. N. Temlyakov},
title={A remark on greedy approximation in Banach spaces},
journal={East J. Approx.},
volume={5},
year={1999},
number={3},
pages={365--379},
note={},
eprint={}
}

\bib{N15}{article}{
author={M. Nielsen},
title={On quasi-greedy bases associated with unitary representations of countable groups},
journal={Glas. Mat. Ser. III},
volume={50},
year={2015},
number={1},
pages={193--205},
note={},
eprint={}
}

\bib{NS07}{article}{
author={M. Nielsen},
author={H. \v{S}iki\'{c}},
title={Schauder bases of integer translates},
journal={Appl. Comput. Harmon. Anal.},
volume={23},
year={2007},
number={2},
pages={259--262},
note={},
eprint={}
}

\bib{NS14}{article}{
author={M. Nielsen},
author={H. \v{S}iki\'{c}},
title={On stability of Schauder bases of integer translates},
journal={J. Funct. Anal.},
volume={266},
year={2014},
number={4},
pages={2281--2293},
note={},
eprint={}
}

\bib{Pa10}{article}{
author={M. Paluszynski},
title={A note on integer translates of a square integrable function on $\Bbb R$},
journal={Colloq. Math.},
volume={118},
year={2010},
number={2},
pages={593--597},
note={},
eprint={}
}

\bib{P88}{book}{
author={A. L. T. Paterson},
title={Amenability},
series={Mathematical Surveys and Monographs},
volume={29},
publisher={AMS, Providence, RI},
year={1988}
}

\bib{R62}{book}{
author={W. Rudin},
title={Fourier analysis on groups},
series={Interscience Tracts in Pure and Applied Mathematics},
volume={12},
publisher={Interscience Publishers, John Wiley and Sons, New York-London},
year={1962}
}

\bib{S13}{article}{
author={S. Saliani},
title={$\ell^2$-linear independence for the system of integer translates of a square integrable function},
journal={Proc. Amer. Math. Soc.},
volume={141},
year={2013},
number={3},
pages={937--941},
note={},
eprint={}
}

\bib{SSl12}{article}{
author={H. \v{S}iki\'{c}},
author={I. Slami\'{c}},
title={Linear independence and sets of uniqueness},
journal={Glas. Mat. Ser. III},
volume={47},
year={2012},
number={2},
pages={415--420},
note={},
eprint={}
}

\bib{SSW08}{article}{
author={H. \v{S}iki\'{c}},
author={D. Speegle},
author={G. Weiss},
title={Structure of the set of dyadic PFW's},
journal={},
volume={},
date={},
number={},
pages={263--291},
note={},
eprint={},
book={
title={Frames and operator theory in analysis and signal processing},
series={Contemp. Math.},
publisher={AMS, Providence, RI},
volume={451},
year={2008}
}}

\bib{SW11}{article}{
author={H. \v{S}iki\'{c}},
author={E. N. Wilson},
title={Lattice invariant subspaces and sampling},
journal={Appl. Comput. Harmon. Anal.},
volume={31},
year={2011},
number={1},
pages={26--43},
note={},
eprint={}
}

\bib{Sl16}{article}{
author={I. Slami\'{c}},
title={$\ell\sp 2$-linear independence for systems generated by dual integrable representations of LCA groups},
journal={to appear in Collect. Math.},
volume={},
year={2016},
number={},
pages={15pp},
note={},
eprint={}
}

\bib{Sl14}{article}{
author={I. Slami\'{c}},
title={$\ell\sp p$-linear independence of the system of integer translates},
journal={J. Fourier Anal. Appl.},
volume={20},
year={2014},
number={4},
pages={766--783},
note={},
eprint={}
}

\bib{OEIS}{book}{
author={N. J. A. Sloane (ed.)},
title={The On-Line Encyclopedia of Integer Sequences (OEIS)},
series={},
volume={},
publisher={published electronically},
year={},
eprint={https://oeis.org}
}

\bib{TV06}{book}{
author={T. Tao},
author={V. Vu},
title={Additive combinatorics},
series={Cambridge Studies in Advanced Mathematics},
publisher={Cambridge University Press, Cambridge},
volume={105},
year={2006}
}

\bib{T09}{article}{
author={T. Tao},
title={Some notes on amenability},
journal={What's new},
year={2009},
eprint={terrytao.wordpress.com}
}

\bib{Tem1}{article}{
author={V.N. Temlyakov},
title={Greedy algorithm and m-term trigonometric approximation},
journal={Constr. Approx.},
volume={14},
year={1998},
number={4},
pages={569--587},
eprint={}
}

\bib{Tem2}{article}{
author={V. N. Temlyakov},
title={The best m-term approximation and greedy algorithms},
journal={Adv. Comput. Math.},
volume={8},
year={1998},
number={3},
pages={249--265},
eprint={}
}

\bib{T06}{book}{
author={C. Thiele},
title={Wave packet analysis},
series={CBMS Regional Conference Series in Mathematics},
volume={105},
publisher={AMS, Providence, RI},
year={2006}
}

\bib{Woj}{article}{
author={P. Wojtaszczyk},
title={Greedy algorithm for general biorthogonal systems},
journal={J. Approx. Theory},
volume={107},
year={2000},
number={2},
pages={293--314},
eprint={}
}

\bib{Z00}{article}{
author={G. M. Ziegler},
title={Lectures on $0/1$-polytopes},
journal={},
volume={},
date={},
number={},
pages={1--41},
note={},
eprint={},
book={
title={Polytopes --- combinatorics and computation},
series={Oberwolfach Seminars},
publisher={Birkh\"{a}user, Basel},
volume={29},
year={2000}
}}

\end{biblist}
\end{bibdiv}

\end{document}